\g@addto@macro\normalsize{%
	\setlength\abovedisplayskip{1ex}
	\setlength\belowdisplayskip{1ex}
	\setlength\abovedisplayshortskip{1ex}
	\setlength\belowdisplayshortskip{1ex}
}
\newtheorem{theorem}{Theorem}
\newtheorem{lemma}{Lemma}
\newtheorem{remark}{Remark}
\renewcommand{\eta}{\upeta}
\renewcommand{\alpha}{\upalpha}
\renewcommand{\beta}{\upbeta}
\renewcommand{\delta}{\updelta}
\renewcommand{\epsilon}{\upepsilon}
\renewcommand{\gamma}{\upgamma}
\renewcommand{\kappa}{\upkappa}
\renewcommand{\lambda}{\uplambda}
\renewcommand{\mu}{\upmu}
\renewcommand{\sigma}{\upsigma}
\renewcommand{\theta}{\uptheta}
\renewcommand{\varepsilon}{\upvarepsilon}
\renewcommand{\tau}{\uptau}
\renewcommand{\zeta}{\upzeta}
\renewcommand{\rho}{\uprho}
\renewcommand{\xi}{\upxi}
\newcommand{\bd}{\bm{d}}
\newcommand{\be}{\bm{e}}
\newcommand{\bg}{\bm{g}}
\newcommand{\bI}{\bm{I}}
\newcommand{\bu}{\bm{u}}
\newcommand{\bv}{\bm{v}}
\newcommand{\bw}{\bm{w}}
\newcommand{\bx}{\bm{x}}
\newcommand{\by}{\bm{y}}
\newcommand{\bz}{\bm{z}}
\newcommand{\bA}{\bm{A}}
\newcommand{\bB}{\bm{B}}
\newcommand{\bG}{\bm{G}}
\newcommand{\bH}{\bm{H}}
\newcommand{\bL}{\bm{L}}
\newcommand{\bP}{\bm{P}}
\newcommand{\bQ}{\bm{Q}}
\newcommand{\bS}{\bm{S}}
\newcommand{\bY}{\bm{Y}}
\newcommand{\btheta}{\bm{\theta}}
\newcommand{\bDelta}{\bm{\Delta}}
\newcommand{\bLambda}{\bm{\Lambda}}
\newcommand{\bzero}{\bm{0}}
\newcommand{\hbH}{\hat{\bm{H}}}
\newcommand{\hbtheta}{\hat{\bm{\theta}}}
\newcommand{\obH}{\overline{\bm{H}}}
\newcommand{\oobH}{\overline{\overline{\bm{H}}}}
\newcommand{\obB}{\overline{\bm{B}}}
\newcommand{\obLambda}{\overline{\bLambda}}
\newcommand{\tbu}{\tilde{\bm{u}}}
\newcommand{\tbv}{\tilde{\bm{v}}}
\newcommand{\tbA}{\tilde{\bm{A}}}
\newcommand{\tbDelta}{\tilde{\bm{\Delta}}}
\newcommand{\real}{\mathbb{R}}
\newcommand{\transpose}{\mathsmaller{T}}
\newcommand{\set}[1]{\left\{ #1 \right\}}
\newcommand{\parenthese}[1]{\left( #1 \right)}
\newcommand{\abs}[1]{\left| #1 \right|}
\newcommand{\norm}[1]{\| #1 \|}
\newcommand{\remove}[1]{}
\begin{document}
\textcopyright~2019 IEEE.  Personal use of this material is permitted.  Permission from IEEE must be obtained for all other uses, in any current or future media, including reprinting/republishing this material for advertising or promotional purposes, creating new collective works, for resale or redistribution to servers or lists, or reuse of any copyrighted component of this work in other works.
\thispagestyle{empty}
\newpage

\setcounter{page}{1}

\title{Efficient Implementation of Second-Order Stochastic Approximation Algorithms in High-Dimensional Problems}

\author[1]{Jingyi Zhu}
\author[1]{Long Wang}
\author[1,2]{James C. Spall \thanks{Correspondence should be addressed to James C. Spall: \href{mailto:james.spall@jhuapl.edu}{james.spall@jhuapl.edu}}}
\affil[1]{Department of Applied Mathematics and Statistics, Johns Hopkins University}
\affil[2]{Applied Physics Laboratory, Johns Hopkins University}

\date{}
 
%


\maketitle
\onehalfspacing

\begin{abstract}
	Stochastic approximation (SA) algorithms have been widely applied in minimization problems when the loss functions and/or the gradient information are only accessible through noisy evaluations. Stochastic gradient (SG) descent---a first-order algorithm and a workhorse of much machine learning---is perhaps the most famous form of SA. Among all SA algorithms, the second-order simultaneous perturbation stochastic approximation (2SPSA) and the second-order stochastic gradient (2SG) are particularly efficient in handling high-dimensional problems, covering both gradient-free and gradient-based scenarios. However, due to the necessary matrix operations, the per-iteration floating-point-operations (FLOPs) cost of the standard 2SPSA/2SG is $O(p^3)$, where $p$ is the dimension of the underlying parameter. Note that the $O(p^3)$ FLOPs cost is distinct from the classical SPSA-based per-iteration $O(1)$ cost in terms of the number of noisy function evaluations. In this work, we propose a technique to efficiently implement the 2SPSA/2SG algorithms via the symmetric indefinite matrix factorization and show that the FLOPs cost is reduced from $O(p^3)$ to $O(p^2)$. The formal almost sure convergence and rate of convergence for the newly proposed approach are directly inherited from the standard 2SPSA/2SG. The improvement in efficiency and numerical stability is demonstrated in two numerical studies.
	
	\textbf{Keywords:} Newton Method, Modified-Newton Method, Quasi-Newton Method, Simultaneous Perturbation Stochastic Approximation (SPSA), Stochastic Optimization, Symmetric Indefinite Factorization
\end{abstract}


%

\section{Introduction}

\subsection{Problem Context}
Stochastic approximation (SA) has been widely applied in minimization and/or root-finding problems, when only \emph{noisy} loss function and/or gradient evaluations are accessible. Consider minimizing a differentiable loss function $ L(\btheta): \real^p \to \real $, $p\ge 1$ being the dimension of $\btheta$, where only noisy evaluations of $L\parenthese{\cdot}$ and/or its gradient $\bg\parenthese{\cdot} $ are accessible. The key distinction between SA and classical deterministic optimization is the presence of noise, which is largely inevitable when the function measurements are collected from either physical experiments or computer simulation. Besides, the noise term comes into play when the loss function is only evaluated on a small subset of an entire (inaccessible) dataset as in online training methods popular with neural network and machine learning. In the era of big-data, we deal with applications where the solution is data dependent such that the cost is minimized over a given set of sampled data rather than the entire distribution. Overall, SA algorithms have numerous applications in adaptive control, natural language processing, facial recognition and collaborative filtering, to name a few.

In modern machine learning, there is a growing need for algorithms to handle high-dimensional problems. Particularly for deep learning, the need arises as the number of parameters (including both weights and bias) explodes quickly as the network depth and width increase. First-order methods based on back-propagation are widely applied, yet they suffer from slow convergence rate in later iterations after a sharp decline during early iterations. Second-order methods are occasionally utilized to speed up convergence in terms of the number of iterations, but, still, at a computational burden of $O(p^3)$ per-iteration floating-point-operations (FLOPs). 

The adaptive second-order methods here differ in fundamental ways from stochastic quasi-Newton and other similar methods in the machine learning literature. First, most of the machine learning-based methods are designed for loss functions of the empirical risk function (ERF) form, namely for functions represented as summations, where each summand represents the contribution of one data vector. Such a structure, together with an assumption of strong convexity, has been exploited in \cite{johnson2013accelerating,martens2015optimizing}. Second, first- or second-order derivative information is often assumed to be directly available on the summands in the loss function (e.g., \cite{byrd2016stochastic, sohl2014fast, schraudolph2007stochastic}). Ref. \cite{saab2019multidimensional} also assumes direct information on the Hessian is available in a second-order stochastic method, but allows for loss functions more general than the ERF. Ref. \cite{byrd2016stochastic} applies the BFGS method to stochastic optimization, but under a nonstandard setup where noisy Hessian information can be gathered. In our work, we assume only noisy loss function evaluations or noisy gradient information are available. Third, notions of convergence and rates of convergence are in line with those in deterministic optimization when the loss function (the ERF) is composed of a finite (although possibly large) number of summands. For example, in \cite{johnson2013accelerating, martens2015optimizing, byrd2016stochastic, sohl2014fast, schraudolph2007stochastic}, rates of convergence are linear or quadratic as a measure of iteration-to-iteration improvement in the ERF. In contrast, we follow the traditional notion of stochastic approximation, including applicability to general noisy loss functions, no availability of direct derivative information, and stochastic notions of convergence and rates of convergence based on sample-points (almost surely, a.s.) and convergence in distribution.

To achieve a faster convergence rate at a reasonable computational cost, we present a second-order simultaneous perturbation (SP) method that incurs only $O(p^2)$ per-iteration FLOPs, in contrast to the standard $O(p^3)$. The idea of SP is an elegant generalization of the finite difference (FD) scheme and can be applied in both first-order and second-order SA algorithms. Our proposed method rests on factorization of symmetric indefinite matrices. 

\subsection{Summary of SP-Based Methods}
Among various stochastic optimization schemes, SP algorithms are particularly efficient compared with the FD methods. Under certain regularity conditions, \cite{spall1992multivariate} shows that the simultaneous perturbation stochastic approximation (SPSA) algorithm uses only $ 1/p $ of the required number of loss function observations needed in the FD form to achieve the same level of mean-squared-error (MSE) for the SA iterates. To further explore the potential of SP algorithms, \cite{spall2000adaptive} presents the second-order SP-based methods, including second-order SPSA (2SPSA) for applications in the gradient-free case and the second-order stochastic gradient (2SG) for applications in the gradient-based case. Those methods estimate the Hessian matrix to achieve near-optimal or optimal convergence rates and can be viewed as the stochastic analogs of the deterministic Newton-Raphson algorithm. Ref. \cite{spall2009feedback} incorporates both a feedback process and an optimal weighting mechanism in the averaging of the per-iteration Hessian estimates to improve the accuracy of the cumulative Hessian estimate in E2SPSA (enhanced 2SPSA) and E2SG (enhanced 2SG). The guidelines for practical implementation details and the choice of gain coefficients are available in \cite{spall1998implementation}. SPSA is also capable in dealing with discrete variables as shown in \cite{wang2011discrete, wang2018mixed}. More details on the related methods are discussed in \cite[Chaps. 7\textendash 8]{bhatnagar2012stochastic}.

\subsection{Our Contribution}
Refs. \cite{spall2000adaptive,spall2009feedback} show that the 2SPSA/2SG methods can achieve near-optimal or optimal convergence rates with a much smaller number (independent of dimension $p$) of loss or gradient function evaluations relative to other second-order stochastic methods in \cite{fabian1971stochastic, ruppert1985newton}. However, after obtaining the function evaluations, the per-iteration FLOPs to update the estimate is $ O(p^3) $, as discussed below. The computational burden becomes more severe as $p$ gets larger. This is usually the case in many modern machine learning applications. Here we propose a scheme to implement 2SPSA/2SG efficiently via the symmetric indefinite factorization, which reduces the per-iteration FLOPs from $ O(p^3) $ to $ O(p^2) $. We also show that the proposed scheme inherits the almost sure convergence and the rate of convergence from the original 2SPSA/2SG in \cite{spall2000adaptive}.

The remainder of the paper is as follows. Section~\ref{sec:2SPSA} reviews the original 2SPSA/2SG in \cite{spall2000adaptive} along with the computational complexity analysis. Section~\ref{sec:efficient_implementation} discusses the proposed efficient implementation. Section~\ref{sec:theory} covers the almost sure convergence and asymptotic normality. Section~\ref{sec:discussion} discusses some practical issues. Numerical studies and conclusions are in Section~\ref{sec:numerical} and Section~\ref{sec:conclusion}, respectively.

\section{Review of 2SPSA/2SG}\label{sec:2SPSA}
Before proceeding, let us review the original 2SPSA/2SG algorithms and explain their $ O(p^3) $ per-iteration FLOPs.

\subsection{2SPSA/2SG Algorithm}
Following the standard SA framework, we find the root(s) of $\bg\parenthese{\btheta}\equiv \partial L\parenthese{\btheta}/\partial\btheta$ in order to solve the problem of finding $ \arg\min L\parenthese{\btheta} $. Our central task is to streamline the computing procedure, so we do not dwell on differentiating the global minimizer(s) from the local ones. Such a root-finding formulation is widely used in the neural network training and other machine learning literature. We consider optimization under two different settings:
\begin{enumerate}
	\item Only noisy measurements of the loss function, denoted by $ y(\btheta), $ are available.
	\item Only noisy measurements of the gradient function, denoted by $ \bY(\btheta) $, are available. 
\end{enumerate}
The conditions for noise can be found in \cite[Assumptions C.0 and C.2]{spall2000adaptive}, which include various type of noise such as Gaussian, multiplicative and impulsive noise as special cases.
The main updating recursion for 2SPSA/2SG in \cite{spall2000adaptive} is 
\begin{equation} \label{eq:theta_update}
\hbtheta_{k+1} = \hbtheta_k - a_k \oobH_k^{-1} \bG_k(\hbtheta_k), k = 0, 1, \dots ,
\end{equation}
where $ \{a_k\}_{k\geq0} $ is a positive decaying scalar gain sequence, $ \bG_k(\hbtheta_k) $ is the direct noisy observation or the approximation of the gradient information, and $ \oobH_k $ is the approximation of the Hessian information. The true gradient $ \bg(\hbtheta_k) $ is estimated by
\begin{numcases}
{\bG_k(\hbtheta_k)=}
\frac{y(\hbtheta_k+c_k\bDelta_k)-y(\hbtheta_k-c_k\bDelta_k)}{2c_k\bDelta_k}, &\hspace{-.25in}\text{for 2SPSA,}\label{eq:gradient_estimate_2SPSA}\\
\bY_k(\hbtheta_k), &\hspace{-.25in}\text{for 2SG,}\label{eq:gradient_estimate_2SG}
\end{numcases}
where $ \bDelta_k = [\Delta_{k1}, \dots, \Delta_{kp}]^\transpose $ is a mean-zero $p$-dimensional stochastic perturbation vector with bounded inverse moments \cite[Assumption B.6$^{\prime\prime}$ on pp. 183]{spall2005introduction}, $ 1 / \bDelta_k = \bDelta_k^{-1} \equiv [\Delta_{k1}^{-1}, \dots, \Delta_{kp}^{-1}]^\transpose $ is a vector of reciprocals of each nonzero components of $ \bDelta_k $ ($\bDelta_k^{-\transpose}$ is the transpose of $\bDelta_k^{-1}$), and $ \{c_k\}_{k\geq0} $ is a positive decaying scalar gain sequence satisfying conditions in \cite[Sect. 7.3]{spall2005introduction}. A valid choice for $ c_k $ is $ c_k = 1 / (k+1)^{1/6}$. For the Hessian estimate $ \oobH_k $, \cite{spall2000adaptive} proposes
\begin{numcases}{}
\oobH_k = \bm{f}_k(\obH_k), & \label{eq:H_ooverline}\\
\obH_k = (1-w_k) \obH_{k-1} + w_k \hbH_k ,& \label{eq:H_overline}\\
\hbH_k=\frac{1}{2}\left[\frac{\updelta\bG_k}{2c_k}\bDelta_k^{-\transpose}+\left(\frac{\updelta\bG_k}{2c_k}\bDelta_k^{-\transpose}\right)^\transpose \right] , \label{eq:H_hat}&\\
\updelta\bG_k=\bG_k^{(1)}(\hbtheta_k+c_k\bDelta_k)-\bG_k^{(1)}(\hbtheta_k-c_k\bDelta_k) , \nonumber&
\end{numcases}
where $ \bm{f}_k\hspace{-0.04in}: \real^{p\times p} \to \{$positive definite $p\times p$ matrices$\}$ is a preconditioning step to guarantee the positive-definiteness of $ \oobH_k $, $ \{w_k\}_{k\geq0} $ is a positive decaying scalar weight sequence, and $ \bG_k^{(1)}(\hbtheta_k\pm c_k\bDelta_k) $ are one-sided gradient estimates calculated by
\begin{align*}
&\bG_k^{(1)}(\hbtheta_k\pm c_k\bDelta_k) =
\begin{cases}
\frac{y(\hbtheta_k\pm c_k\bDelta_k+\tilde{c}_k\tbDelta_k)-y(\hbtheta_k\pm c_k\bDelta_k)}{\tilde{c}_k\tbDelta_k}, &\hspace{-.1in}\text{in 2SPSA,}\\
\bY_k(\hbtheta_k\pm c_k\bDelta_k), &\hspace{-.1in}\text{in 2SG,}
\end{cases}
\end{align*}
where $ \{\tilde{c}_k\}_{k\geq0} $ is another positive decaying gain sequence, and $ \tbDelta_k = [\tilde{\Delta}_{k1}, \dots, \tilde{\Delta}_{kp}]^\transpose $ is generated independently of $ \bDelta_k $, but in the same statistical manner as $ \bDelta_k $. Some valid choices for $w_k$ include $w_k=1/(k+1)$ and the asymptotically optimal choices in \cite[Eq. (4.2) or Eq. (4.3)]{spall2009feedback}. Ref. \cite{spall2000adaptive} considers the special case where $ w_k=1/\parenthese{k+1} $, i.e., $\obH_k$ is a sample average of the $ \hbH_j $ for $ j = 1, ..., k $. Later, \cite{spall2009feedback} proposes the E2SPSA and E2SG to obtain more accurate Hessian estimates by taking the optimal selection of weights and feedback-based terms in (\ref{eq:H_overline}) into account. While the focus of this paper is the original 2SPSA/2SG in \cite{spall2000adaptive}, we also discuss the applicability of the ideas to the E2SPSA/E2SG algorithms in \cite{spall2009feedback}. Note that, independent of $p$, one iteration of 2SPSA/E2SPSA uses four noisy measurements $ y(\cdot) $ and one iteration of 2SG/E2SG uses three noisy measurements $ \bY(\cdot) $.

\subsection{Per-Iteration Computational Cost of $ O(p^3) $} \label{subsect:p3cost}

The per-iteration computational cost of $ O(p^3) $ arises from two steps: one is from the preconditioning step in (\ref{eq:H_ooverline}), i.e., obtaining $ \oobH_k $; the other is from the descent direction step in (\ref{eq:theta_update}), i.e., obtaining $ \oobH_k^{-1}\bG_k(\hbtheta_k) $. We now discuss the per-iteration computational cost of these two steps in more detail.

\textbf{Preconditioning} The preconditioning step in (\ref{eq:H_ooverline}) is to guarantee the positive-definiteness of the Hessian estimate $ \oobH_k $. This step is necessary, because the updating of $ \obH_k $ in (\ref{eq:H_overline}) does not necessarily yield a positive-definite matrix (but $ \obH_k $ is guaranteed to be symmetric). One straightforward way is to perform the following transformation:
\begin{equation} \label{eq:f_k_sqrtm}
\bm{f}_k(\obH_k) = (\obH_k \obH_k + \delta_k \bI)^{1/2} ,
\end{equation}
where $ \delta_k > 0 $ is a small \emph{decaying} scalar coefficient \cite{spall2000adaptive} and superscript ``1/2" denotes the symmetric matrix square root. Let $ \lambda_i(\cdot) $ denote the $i$-th eigenvalue of the argument. Since $ \lambda_i(\bA+c\bI) = \lambda_i(\bA)+c $ for any matrix $\bA$ and constant $c$ \cite[Obs. 1.1.7]{horn1990matrix}, we see that (\ref{eq:f_k_sqrtm}) directly modifies the eigenvalues of $ \obH_k\obH_k $ such that $ \lambda_i(\obH_k\obH_k + \delta_k\bI) = \lambda_i(\obH_k\obH_k) + \delta_k $ for $ i = 1, ..., p $. When $ \delta_k > 0 $, all the eigenvalues of $ \obH_k\obH_k + \delta_k\bI $ are strictly positive and therefore the resulting $ \oobH_k $ is positive definite. However, (\ref{eq:f_k_sqrtm}) has a computational cost of $ O(p^3) $ due to both the matrix multiplication in $ \obH_k \obH_k $ and the matrix square root computing \cite{higham1987computing}. Another intuitive transformation is 
\begin{equation} \label{eq:f_k_add}
\bm{f}_k(\obH_k) = \obH_k + \delta_k \bI
\end{equation} for a positive and sufficiently large $ \delta_k $. Again, applying eigen-decomposition on $ \obH_k $, we see that $ \lambda_i(\oobH_k) = \lambda_i(\obH_k) + \delta_k $ for $ i = 1, \cdots, p $. Take $ \uplambda_{\min}\parenthese{\cdot}= \min_{1\le i\le p } \uplambda_i\parenthese{\cdot} $ for any argument matrix in $\real^{p\times p}$. Any $ \delta_k > |\lambda_{\min}(\obH_k)| $ will result in $ \lambda_{\min}(\oobH_k) > 0 $, and therefore the output $ \oobH_k $ is positive definite. Unfortunately, (\ref{eq:f_k_add}) cannot avoid the $O(p^3)$ cost in estimating $ \lambda_{\min}(\obH_k) $. 

Besides the $O(p^3)$ cost in (\ref{eq:f_k_sqrtm}) and (\ref{eq:f_k_add}), the Hessian estimate $ \oobH_k $ may be ill-conditioned, leading to slow convergence. Ref. \cite{zhu2002modified} proposes to replace all negative eigenvalues of $ \obH_k $ with values proportional to its smallest positive eigenvalue. Such modification is shown to improve the convergence rate for problems with ill-conditioned Hessian and achieve smaller mean square errors for problems with better-conditioned Hessian compared with original 2SPSA \cite{zhu2002modified}. However, those benefits are gained at a price of computing the eigenvalues of $ \obH_k $, which still costs $ O(p^3) $.

\textbf{Descent direction} Another per-iteration computational cost of $ O(p^3) $ originates from computing the descent direction in (\ref{eq:theta_update}), which is typically computed by solving the linear system for $ \bd_k: \oobH_k\bd_k = \bG_k(\hbtheta_k) $. The estimate is updated recursive as following: 
\begin{equation}\label{eq:theta_update_s}
\hbtheta_{k+1} = \hbtheta_k - a_k\bd_k .
\end{equation} 
With the matrix left-division, it is possible to efficiently solve for $ \bd_k $. However, the computation costs of typical methods, such as $LU$ decomposition or singular value decomposition, are still dominated by $ O(p^3) $.

To speed up the original 2SPSA/2SG, \cite{rastogi2016efficient} proposes to rearrange (\ref{eq:H_overline}) and (\ref{eq:H_hat}) into the following two sequential rank-one modifications,
\begin{numcases}{}
\obH_k = t_k\obH_{k-1} + b_k\tbu_k\tbu_k^{\transpose} - b_k\tbv_k\tbv_k^{\transpose}, & \label{eq:two_rank_one_update}\\
\tbu_k = \sqrt{\frac{\norm{\bv_k}}{2\norm{\bu_k}}} \parenthese{\bu_k + \frac{\norm{\bu_k}}{\norm{\bv_k}}\bv_k} , & \label{eq:u_k_tilde}\\
\tbv_k = \sqrt{\frac{\norm{\bv_k}}{2\norm{\bu_k}}} \parenthese{\bu_k - \frac{\norm{\bu_k}}{\norm{\bv_k}}\bv_k} , &\label{eq:v_k_tilde}
\end{numcases}
where the scalar terms $ t_k $ and $ b_k $ (\ref{eq:two_rank_one_update}), and vectors $ \bu_k $ and $ \bv_k $ in (\ref{eq:u_k_tilde}) and (\ref{eq:v_k_tilde}) are listed in Table~\ref{table:u_k_v_k}. Applying the matrix inversion lemma \cite[pp. 513]{spall2005introduction}, \cite{rastogi2016efficient} shows that $ \obH_k^{-1} $ can be computed from $ \obH_{k-1}^{-1} $ with a cost of $ O(p^2) $. However, the positive-definiteness of $ \obH_k^{-1} $ is not guaranteed, and an additional eigenvalue modification step similar to either (\ref{eq:f_k_sqrtm}) or (\ref{eq:f_k_add}) is required. As discussed before, for any direct eigenvalue modifications, the computational cost of $ O(p^3) $ is inevitable due to the lacking knowledge about the eigenvalues of $ \obH_{k-1}^{-1} $.

In short, no prior works can fully streamline the entire second-order SP procedure with an $O(p^2)$ per-iteration FLOPs, which motivates the elegant procedure below. 

\begin{table*}[!htbp]
	\renewcommand{\arraystretch}{2}
	\caption{Expressions for terms in (\ref{eq:two_rank_one_update})--(\ref{eq:v_k_tilde}). See \cite[Sect. 7.8.2]{spall2005introduction} for detailed suggestions.}
	\label{table:u_k_v_k}
	\centering
	\begin{tabular}{|l|c|c|c|c|}
		\hline
		Algorithm & $ t_k $ & $ b_k $ & $ \bu_k $ & $ \bv_k $\\
		\hline\hline	
		2SPSA \cite{spall2000adaptive} & $ 1 - w_k $ & $ w_k \delta y_k / (4c_k\tilde{c}_k) $ & $ \tbDelta_k^{-1} $ & \multirow{4}{*}{$ \bDelta_k^{-1}$} \\
		\cline{1-4}
		E2SPSA \cite{spall2009feedback} & $ 1 $ & $ w_k[\updelta y_k / (2c_k\tilde{c}_k) - \bDelta_k^\transpose\obH_{k-1}\tbDelta_k]/2 $ & $ \tbDelta_k^{-1} $ & \\
		\cline{1-4}
		2SG \cite{spall2000adaptive} & $ 1 - w_k $ & $ w_k / (4c_k) $ & $ \delta\bG_k $ & \\
		\cline{1-4}
		E2SG \cite{spall2009feedback} & $ 1 $ & $ w_k / 2 $ & $ \delta\bG_k/(2c_k) - \obH_{k-1} \bDelta_k $ & \\
		\hline
	\end{tabular}
\end{table*}

\section{Efficient implementation of 2SPSA/2SG} 
\label{sec:efficient_implementation}
\subsection{Introduction}\label{subsect:Introduction}
With the motivation for proposing an efficient implementation scheme for 2SPSA/2SG laid out in Subsection~\ref{subsect:p3cost}, we now explain our methodology in more detail. Note that none of the prior attempts on 2SPSA/2SG methods can bypass the end-to-end computational cost of $O(p^3)$ per iteration in high-dimensional stochastic optimization problems. Therefore, we propose to replace $ \obH_k $ by its symmetric indefinite factorization, which enables us to implement the 2SPSA/2SG at a per-iteration computational cost of $O(p^2)$. Our work helps alleviate the notorious curse of dimensionality by achieving, to the best of our knowledge, the fastest possible second-order methods based on Hessian estimation. Also, note that the techniques in \cite{rastogi2016efficient} are no longer applicable because our scheme keeps track of the matrix factorization in lieu of the matrix itself, so we propose new algorithms to establish our claims.

To better illustrate our scheme and to be consistent with the original 2SPSA/2SG, we decompose our approach into the following three main steps and discuss the efficient implementation step by step.

\begin{enumerate}
	\item[i)] \label{item:ranktwomodification} \textbf{Two rank-one modifications}: Update the symmetric indefinite factorization of $ \obH_k $ by the two sequential rank-one modifications in (\ref{eq:two_rank_one_update}).
	\item[ii)] \label{item:preconditioning} \textbf{Preconditioning}: Obtain the symmetric factorization of a positive definite $ \oobH_k $ from the symmetric factorization of $ \obH_k $.
	\item[iii)] \label{item:descentdirection} \textbf{Descent direction}: Update $ \hbtheta_{k+1} $ by recursion (\ref{eq:theta_update_s}).
\end{enumerate}

Note that $ \obH_k $ is guaranteed to be symmetric by (\ref{eq:two_rank_one_update}) as long as $ \obH_0 $ is chosen symmetric. For the sake of comparison, we present the flow-charts of the original 2SPSA and that of our proposed scheme in Figure~\ref{fig:Flow_chart}, 
along with the per-iteration and per-step computational cost. The comparison of the flow-charts helps to put the extra move of indefinite factorization into perspective. 

{ 
	\tikzset{font=\footnotesize} 
	\tikzstyle{block} = [rectangle, draw, fill=blue!20, text centered, rounded corners, minimum height=2em]
	\tikzstyle{line} = [draw, -latex']
	\begin{figure*}[!hbtp]
		\centering
		\subfloat[Flow chart for the original 2SPSA/2SG]{
			\begin{tikzpicture}[auto]
			\node [block] (hbtheta) {$ \hbtheta_k $};
			\node [block, right = 1.8cm of hbtheta, text width=1.5cm] (bG) {$ \bG_k(\hbtheta_k)$,\\$\hbH_k $};
			\node [block, right = 1.2cm of bG] (obH) {$ \obH_k $};
			\node [block, right = 2.4cm of obH] (oobH) {$ \oobH_k $};
			\node [block, right = 2.8cm of oobH] (direction) {$ \bd_k $};
			\node [block, right = 1.6cm of direction] (hbthetanew) {$ \hbtheta_{k+1} $};
			\path [line] (hbtheta) -- node {2SPSA/2SG} node[below] {$ O(p^2) $} (bG);
			\path [line] (bG) -- node {(\ref{eq:H_overline})} node[below] {$ O(p^2) $} (obH);
			\path [line] (obH) -- node[align=center] {maintain positive- \\ [-0.4ex] definiteness (\ref{eq:H_ooverline})} node[below] {$ O(p^3) $} (oobH);
			\path [line] (oobH) -- node[align=center]{solve full-rank system \\ [-0.3ex] via back-division} node[below] {$ O(p^3) $} (direction);
			\path [line] (direction) -- node[align=center] {recursive \\ [-0.4ex] update (\ref{eq:theta_update})} node[below] { $ O(p) $} (hbthetanew);
			\end{tikzpicture}
			\label{fig:Original}}
		\hfil
		\subfloat[Flow chart for the proposed efficient implementation of 2SPSA/2SG (see Section~\ref{subsec:complexity} for detailed description)]{
			\begin{tikzpicture}[auto]
			\node [block] (hbtheta) {$ \hbtheta_k $};
			\node [block, right = 1.7cm of hbtheta, text width=1.5cm] (bG) {$ \bG_k(\hbtheta_k)$, \\ $\tbu_k, \tbv_k $};
			\node [block, right = 2.5cm of bG, text width=1.7cm] (obH) {factorization of $ \obH_k $};
			\node [block, right = 1.8cm of obH, text width=1.7cm] (oobH) {factorization of $ \oobH_k $};
			\node [block, right = 2cm of oobH] (direction) {$ \bd_k $};
			\node [block, right = 0.8cm of direction] (hbthetanew) {$ \hbtheta_{k+1} $};
			\path [line] (hbtheta) -- node {2SPSA/2SG} node[below] {$ O(p) $} (bG);
			\path [line] (bG) -- node[align=center] {two rank-one updates \\ [-0.4ex] via Algo.~\ref{algo:two_rank_one_update}} node[below] {$ O(p^2) $} (obH);
			\path [line] (obH) -- node[align=center] {preconditioning\\ [-0.4ex] via Algo.~\ref{algo:preconditioning}} node[below] {$ O(p^2) $} (oobH);
			\path [line] (oobH) -- node[align=center] {solve triangular systems\\ [-0.4ex] via Algo.~\ref{algo:descent_direction}} node[below] {$ O(p^2) $} (direction);
			\path [line] (direction) -- node {(\ref{eq:theta_update})} node[below] {$ O(p) $} (hbthetanew);
			\end{tikzpicture}
			\label{fig:Proposed}}
		\captionsetup{justification=centering}
		\caption{Flow charts showing FLOPs cost at each stage of the original 2SPSA/2SG and the proposed 2SPSA/2SG. Algorithms~\ref{algo:two_rank_one_update}--\ref{algo:descent_direction} in the lower path are described in Section~\ref{subsec:algo}.}
		\label{fig:Flow_chart}
	\end{figure*}
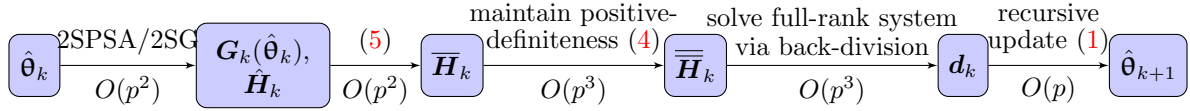
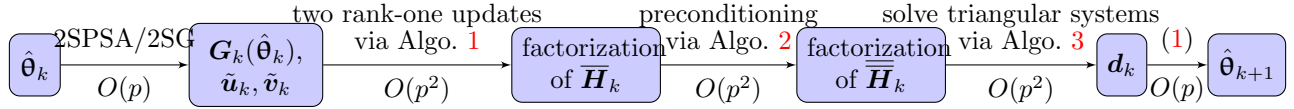
}

The remainder of this section is as follows. We introduce the symmetric indefinite factorization in Subsection~\ref{subsec:IMF} and derive the efficient algorithms in Subsection~\ref{subsec:algo}. The per-iteration computational complexity analysis is included in Subsection~\ref{subsec:complexity}. 

\subsection{Symmetric Indefinite Factorization} \label{subsec:IMF}

This subsection briefly reviews the symmetric indefinite factorization, also called $ \bL\bB\bL^\transpose $ factorization, introduced in \cite{bunch1971direct}, which applies to any symmetric matrix $ \obH $ regardless of the positive-definiteness:
\begin{equation}
\label{eq:LBL}
\bP \obH \bP^\transpose = \bL\bB\bL^\transpose, 
\end{equation}
where $ \bP $ is a permutation matrix, $ \bB $ is a block diagonal matrix with diagonal blocks being symmetric with size $ 1\times1 $ or $ 2\times2 $, and $ \bL $ is a lower-triangular matrix. Furthermore, the matrices $ \bL $ and $ \bB $ satisfy the following properties \cite[Sect. 4]{bunch1971direct}, which are fundamental for carrying out subsequent steps i) -- iii) at a computational cost of $ O(p^2) $:
\begin{itemize}
	\item The magnitudes of the entries of $ \bL $ are bounded by a fixed positive constant. Moreover, the diagonal entries of $ \bL$ are all equal to $1$.
	\item $ \bB $ has the same number of positive, negative, and zero eigenvalues as $ \obH $.
	\item The number of negative eigenvalues of $ \obH $ is the sum of the number of blocks of size $ 2\times2 $ on the diagonal and the number of blocks of size $ 1\times1 $ on the diagonal with negative entries of $ \bB $. (Note: There are no guarantees for the signs of the entries in the $ 2\times2 $ blocks.)
\end{itemize}

The bound on the magnitudes of the entries of $ \bL $ is approximately $ 2.7808 $ per \cite{bunch1977some} and it is \textit{independent} of the size of $ \obH $. As shown in Theorems~\ref{thm:H_barbar_property}--\ref{thm:H_barbar_uniform_bound}, such a constant bound is useful in practice to perform a quick sanity regarding the appropriateness of the symmetric indefinite factorization and to provide useful bounds for the eigenvalues of $ \oobH_k $. From (\ref{eq:LBL}), $ \obH $ can be expressed as $ \obH = (\bP^\transpose\bL) \bB(\bP^\transpose\bL)^\transpose $. Then the second bullet point above can be shown easily by \textit{Sylvester's law of inertia}, which states that two congruent matrices have the same number of positive, negative and zero eigenvalues ($ \bA $ and $\bB $ are congruent if $ \bA = \bP\bB\bP^\transpose $ for some nonsingular matrix $ \bP $) \cite{sylvester1852xix}. From the third bullet point, if $ \obH $ is positive semidefinite, the corresponding $\bB$ is a diagonal matrix with nonnegative diagonal entries.

\subsection{Main Algorithms}
\label{subsec:algo}

We now illustrate how the $ \bL\bB\bL^\transpose $ factorization can be used in 2SPSA/2SG and discuss steps i) -- iii) in Sect. \ref{subsect:Introduction} in detail. The results are presented in three algorithms, with Algorithms~\ref{algo:two_rank_one_update}--\ref{algo:descent_direction} implementing steps \ref{item:ranktwomodification}\textendash \ref{item:descentdirection}, respectively. Algorithm~\ref{algo:2SP} in Subsection~\ref{subsec:complexity} is to produce the updated $\hbtheta_k$. Code for all algorithms is available at \url{https://github.com/jingyi-zhu/Fast2SPSA}.

\textbf{Two rank-one modifications} Although the direct calculation of $ \obH_k $ in (\ref{eq:two_rank_one_update}) only costs $ O(p^2) $, the subsequent preconditioning step incurs a computational cost of $ O(p^3) $ when not using any factorization of $ \obH_k $. Therefore, in anticipation of the subsequent necessary preconditioning, we propose to keep track of the $ \bL\bB\bL^\transpose $ factorization of $ \obH_k $ instead of the matrix itself. That is, the two direct rank-one modifications in (\ref{eq:two_rank_one_update}) are transformed to two non-trivial modifications on the $ \bL\bB\bL^\transpose $ factorization, which also incurs a computational cost of $ O(p^2) $. It is not necessary that $ \obH_k $ be explicitly computed in the algorithm, thereby avoiding the $ O(p^3) $ cost arising from matrix-associated necessary multiplications in the preconditioning. 

Lemma~\ref{lem:rank_one_update} states that the $ \bL\bB\bL^\transpose $ factorization can be updated for rank-one modification at a computational cost of $O(p^2)$. The detailed algorithm is established in \cite{sorensen1977updating}. We adopt that algorithm to our two rank-one modifications in (\ref{eq:two_rank_one_update}) and present the result in Theorem~\ref{thm:two_rank_one_update}.

\begin{lemma}\label{lem:rank_one_update}
	\textit{\cite[Thm. 2.1]{sorensen1977updating}}. 
	Let $ \bA \in \real^{p \times p} $ be symmetric (possibly indefinite) and \emph{non-singular} with $ \bP\bA\bP^\transpose = \bL\bB\bL^\transpose $. Suppose that $ \bz \in \real^p, \sigma \in \real $ are such that
	\begin{equation}\label{eq:IMF_rank_one_update}
	\tbA = \bA + \sigma\bz\bz^\transpose
	\end{equation}
	is also \emph{nonsingular}. Then the factorization $ \tilde{\bP}\tbA\tilde{\bP}^\transpose = \tilde{\bL}\tilde{\bB}\tilde{\bL}^\transpose $ can be obtained from the factorization $ \bP\bA\bP^\transpose = \bL\bB\bL^\transpose $ with a computational cost of $ O(p^2) $.
\end{lemma}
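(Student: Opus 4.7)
The plan is to reduce the rank-one update of an arbitrary $\bL\bB\bL^\transpose$ factorization to the (still nontrivial) task of updating a \emph{block diagonal} factorization, and then to perform that update by a single sweep across the blocks that uses only $O(p)$ work per block. Concretely, I would first absorb the existing permutation into new coordinates by setting $\bw = \bP\bz$, so that
\begin{equation*}
\bP\tbA\bP^\transpose \;=\; \bL\bB\bL^\transpose + \sigma\,\bw\bw^\transpose .
\end{equation*}
Then I would solve the unit lower triangular system $\bL\bp = \bw$ by forward substitution, which costs $O(p^2)$, and factor out $\bL$ on the left and $\bL^\transpose$ on the right to obtain
\begin{equation*}
\bP\tbA\bP^\transpose \;=\; \bL\bigl(\bB + \sigma\,\bp\bp^\transpose\bigr)\bL^\transpose .
\end{equation*}
If one can produce an $\bL\bB\bL^\transpose$-type factorization $\bQ(\bB + \sigma\bp\bp^\transpose)\bQ^\transpose = \bM\bD\bM^\transpose$ with $\bM$ unit lower triangular and $\bD$ block diagonal with $1\times 1$ and $2\times 2$ diagonal blocks, then absorbing $\bQ$ into the permutation and setting $\tilde\bL = (\bQ\bL\bQ^\transpose)\bM$ and $\tilde\bB = \bD$ gives the desired factorization.

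The heart of the argument is therefore to factor the symmetric matrix $\bC := \bB + \sigma\bp\bp^\transpose$ in $O(p^2)$ flops. The key observation is that $\bC$ is a block diagonal matrix perturbed by a rank-one term, so the Bunch--Kaufman/Sorensen pivoting strategy can be applied to $\bC$ one block at a time. At the $j$th step the current leading block of $\bB$ (of size $s\in\{1,2\}$) together with the current leading entries of $\bp$ determine either a $1\times 1$ or $2\times 2$ pivot $\bD_{jj}$; one then chooses the $s$ corresponding columns of $\bM$ below the pivot by a closed-form rank-$s$ elimination, and finally updates the \emph{remaining} entries of $\bp$ (a vector of length $p-j-s+1$, not a full matrix) to reflect the Schur complement of the pivot. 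Because only a vector is updated at each stage, the cost at step $j$ is $O(p-j)$, and summing over the $O(p)$ blocks gives the claimed $O(p^2)$ total. It is important to note that $\bM$ can be built column-by-column during the sweep, so the product $\bL\bM$ need never be formed explicitly; instead the columns of $\bL$ affected by $\bM$ are modified in place, again at $O(p)$ per block.

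The main obstacle is controlling the pivoting so that the sweep is well-defined and produces a bounded $\tilde\bL$. Two issues arise: a diagonal block of $\bB$ that was $1\times 1$ may need to become part of a $2\times 2$ pivot in $\bD$ (or vice versa) because the rank-one term can create or annihilate small diagonal entries; and at some step the natural $1\times 1$ pivot may be zero or dangerously small, forcing a row/column interchange captured by $\bQ$. Both phenomena are handled by the Bunch--Parlett pivoting rule invoked at each block: because $\tbA$ is assumed nonsingular, the theory in \cite{bunch1971direct,bunch1977some} guarantees that at every stage one can select an $s\times s$ pivot whose inverse exists and for which the resulting multipliers stored in $\bM$ are bounded by the universal constant $\approx 2.7808$ independent of $p$. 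This is the delicate combinatorial part of the argument; once it is in place, the bookkeeping reduces to a finite case analysis ($s=1$ or $s=2$ on the pivot side, and likewise on the current block of $\bB$), each case producing explicit formulas for the new pivot, the new multipliers, and the updated tail of $\bp$, all of which fit within the $O(p^2)$ budget.
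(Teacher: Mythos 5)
The paper gives no proof of this lemma at all: it is imported verbatim from Sorensen's report (\cite[Thm. 2.1]{sorensen1977updating}) and used as a black box, so there is no internal argument to compare yours against. Your sketch is, in outline, the argument of that cited reference: reduce to $\bL(\bB+\sigma\bm{p}\bm{p}^\transpose)\bL^\transpose$ via one forward solve, then eliminate the perturbed block-diagonal matrix in a single pivoted sweep.

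Two steps where the real work lies are asserted rather than established. First, writing $\tilde{\bL}=(\bQ\bL\bQ^\transpose)\bM$ presumes that conjugating $\bL$ by the new permutation preserves unit lower triangularity; for a general permutation $\bQ$ it does not. Sorensen's algorithm only ever interchanges indices inside the current (at most $2\times 2$) pivot window, and restoring triangularity after such a local swap is precisely the delicate part of the proof; an unrestricted Bunch--Parlett search over all of $\bC=\bB+\sigma\bm{p}\bm{p}^\transpose$ would destroy the structure and the $O(p^2)$ bound with it. Second, the claim that $\bL$ is updated ``in place at $O(p)$ per block'' needs the observation that every subdiagonal column of $\bM$ is a scalar multiple of the current tail of the working vector, so the new columns $\bL\bM_{:,j}$ satisfy a forward recurrence costing $O(p)$ each; a naive column-by-column multiplication by $\bL$ costs $O(p^2)$ per column and $O(p^3)$ in total. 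Neither gap is fatal --- both are resolved in the cited reference --- but as written your proposal names the destination without crossing the two bridges that make the $O(p^2)$ claim true.
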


\begin{theorem}\label{thm:two_rank_one_update}
	Suppose $ \obH_k $ is given in (\ref{eq:two_rank_one_update}). Further assume that both $ \obH_{k-1} $ and $ \obH_k $ are \emph{nonsingular} and the factorization $ \bP_{k-1}\obH_{k-1}\bP_{k-1}^\transpose = \bL_{k-1}\bB_{k-1}\bL_{k-1}^\transpose $ is available. Then the factorization
	\begin{equation}\label{eq:IMF_rwo_rank_one_update}
	\bP_k\obH_k\bP_k^\transpose = \bL_k\bB_k\bL_k^\transpose
	\end{equation}
	can be obtained at a computational cost of $ O(p^2) $.
\end{theorem}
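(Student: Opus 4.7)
The plan is to view the update (\ref{eq:two_rank_one_update}) as a scaling followed by two successive rank-one modifications, and to invoke Lemma~\ref{lem:rank_one_update} on each modification. Concretely, write
\[
\obH_k \;=\; \underbrace{\bigl(t_k \obH_{k-1}\bigr)}_{\bA^{(0)}_k} \;+\; \underbrace{b_k \tbu_k \tbu_k^\transpose}_{\text{rank-one, }\sigma=+b_k} \;-\; \underbrace{b_k \tbv_k \tbv_k^\transpose}_{\text{rank-one, }\sigma=-b_k},
\]
and process the three pieces in order to obtain $\bL_k\bB_k\bL_k^\transpose$ incrementally.

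First, I would rescale the existing factorization. From $\bP_{k-1}\obH_{k-1}\bP_{k-1}^\transpose = \bL_{k-1}\bB_{k-1}\bL_{k-1}^\transpose$, multiplying both sides by $t_k$ yields
\[
\bP_{k-1}\bA^{(0)}_k\bP_{k-1}^\transpose \;=\; \bL_{k-1}\,(t_k\bB_{k-1})\,\bL_{k-1}^\transpose,
\]
so the factorization of $\bA^{(0)}_k$ is obtained by rescaling only the block-diagonal middle factor. Since $\bB_{k-1}$ has $O(p)$ nonzero entries (at most $p$ scalar blocks and a bounded number of $2\times 2$ blocks each), this step costs $O(p)$. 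Second, I would apply Lemma~\ref{lem:rank_one_update} with $\bA = \bA^{(0)}_k$, $\bz = \tbu_k$, $\sigma = b_k$ to produce the factorization $\bP^{(1)}_k \bA^{(1)}_k (\bP^{(1)}_k)^\transpose = \bL^{(1)}_k \bB^{(1)}_k (\bL^{(1)}_k)^\transpose$ of $\bA^{(1)}_k \equiv \bA^{(0)}_k + b_k\tbu_k\tbu_k^\transpose$ in $O(p^2)$ operations. Third, I would apply Lemma~\ref{lem:rank_one_update} a second time with $\bA = \bA^{(1)}_k$, $\bz = \tbv_k$, $\sigma = -b_k$ to obtain $\bP_k\obH_k\bP_k^\transpose = \bL_k\bB_k\bL_k^\transpose$, again in $O(p^2)$ operations. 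Summing the three pieces gives the stated $O(p^2)$ bound.

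The main technical obstacle is verifying the \emph{nonsingularity hypothesis} required by Lemma~\ref{lem:rank_one_update} at each invocation. The theorem's assumptions guarantee that $\bA^{(0)}_k = t_k\obH_{k-1}$ (provided $t_k \neq 0$, as holds for the choices of $t_k$ in Table~\ref{table:u_k_v_k}) and the final matrix $\obH_k$ are both nonsingular, but they do not directly guarantee that the \emph{intermediate} matrix $\bA^{(1)}_k$ is nonsingular. I would address this by noting that the singularity of $\bA^{(1)}_k$ is a measure-zero event in the randomness of the perturbations generating $\tbu_k,\tbv_k$, and, operationally, by allowing a swap in the order of the two rank-one updates: if the $+b_k\tbu_k\tbu_k^\transpose$ update produces a singular intermediate, perform the $-b_k\tbv_k\tbv_k^\transpose$ update first instead. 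Since $\obH_k$ differs from $t_k\obH_{k-1}$ by a rank-at-most-two symmetric perturbation, at least one of the two orderings must yield a nonsingular intermediate matrix; otherwise $\obH_k$ would differ from $t_k\obH_{k-1}$ by a perturbation whose both rank-one pieces drive the matrix through a singularity, a non-generic event that can be perturbed away by an arbitrarily small symmetric adjustment without affecting the $O(p^2)$ bookkeeping.

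Finally, I would note that the permutation matrices produced by the two applications of Lemma~\ref{lem:rank_one_update} can be composed with $\bP_{k-1}$ in $O(p)$ extra work, so the total cost remains $O(p^2)$, completing the proof.
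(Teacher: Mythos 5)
Your proof takes essentially the same route as the paper's: the paper's entire argument is to absorb the scaling $t_k$ into the block-diagonal factor (Algorithm~\ref{algo:two_rank_one_update} sets $\bB_k \gets t_k\bB_{k-1}$) and then invoke Lemma~\ref{lem:rank_one_update} twice, once with $\sigma=b_k,\bz=\tbu_k$ and once with $\sigma=-b_k,\bz=\tbv_k$, for a total of $O(p^2)$. Where you go beyond the paper is in worrying about the nonsingularity of the intermediate matrix $\bA^{(1)}_k = t_k\obH_{k-1}+b_k\tbu_k\tbu_k^\transpose$; this is a genuine gap that the paper's proof silently ignores (its hypotheses cover only $\obH_{k-1}$ and $\obH_k$). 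However, your specific fix --- that at least one of the two orderings of the rank-one updates must produce a nonsingular intermediate --- is not correct in general. By the matrix determinant lemma, $\det(\bA+b_k\tbu_k\tbu_k^\transpose)=\det(\bA)\,(1+b_k\tbu_k^\transpose\bA^{-1}\tbu_k)$ and $\det(\bA-b_k\tbv_k\tbv_k^\transpose)=\det(\bA)\,(1-b_k\tbv_k^\transpose\bA^{-1}\tbv_k)$ with $\bA=t_k\obH_{k-1}$, and since $\bA$ may be indefinite the quadratic forms $\tbu_k^\transpose\bA^{-1}\tbu_k$ and $\tbv_k^\transpose\bA^{-1}\tbv_k$ can take the values $-1/b_k$ and $1/b_k$ simultaneously, making both orderings fail. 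The sound fallback is the one you mention second and the one the paper itself adopts in the remark following the theorem: singularity of any intermediate is a probability-zero event in the random perturbations, is detectable via the entrywise bound on $\bL_k$, and can be handled by regenerating $\bDelta_k$ and $\tbDelta_k$; alternatively one can simply add nonsingularity of the intermediate matrix to the hypotheses, which is all Lemma~\ref{lem:rank_one_update} actually requires.
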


\begin{proof} 	 
	With Lemma~\ref{lem:rank_one_update}, we see that (\ref{eq:IMF_rwo_rank_one_update}) can be obtained by applying (\ref{eq:IMF_rank_one_update}) twice with $ \sigma = b_k, \bz = \tbu_k $ and $ \sigma = -b_k, \bz = \tbv_k $, respectively. Because each update requires a computational cost of $ O(p^2) $, the total computational cost remains $ O(p^2) $.
\end{proof}

\begin{remark}
	The nonsingularity (\emph{not} necessarily positive-definiteness) of $ \obH_k $ is a modest assumption for the following reasons: i) $ \obH_0 $ is often initialized to be a positive definite matrix satisfying the nonsingularity assumption, e.g., $ \obH_0 = c\bI $ for some constant $ c > 0 $. ii) Whenever $ \obH_k $ violates the nonsingularity assumption due to the two rank-one modifications in (\ref{eq:two_rank_one_update}), a new pair of $ \bDelta_k $ and $ \tbDelta_k $ along with the noisy measurements can be generated to redo the modifications in (\ref{eq:two_rank_one_update}). In practice, the singularity of $ \obH_k $ can be detected via the entry-wise bounds of $ \bL_k $ per \cite{bunch1977some}. Namely, if $ \bL_k $ has an entry exceeding $ 2.7808 $, the nonsingularity assumption of $ \obH_k $ is violated. It is indeed possible to compute the probability of getting a singular $ \obH_k $; however, we deem it as a minor practical issue and do not pursue further analysis in this work. iii) Because the second-order method is often recommended to be implemented only after $ \hbtheta_k $ reaches the vicinity of the optimal point $ \btheta^* $, and the true Hessian matrix of $ \btheta^* $ is assumed to be positive definite \cite{spall2000adaptive}, the estimate $ \obH_k $ is ``pushed" towards nonsingularity. The bottom line is that we are able to run second-order methods at any iteration $ k $, but are more interested when $\hbtheta_k$ is near $\btheta^*$.
\end{remark}

We summarize the two rank-one modifications of $ \obH_k $ in the following Algorithm~\ref{algo:two_rank_one_update}. The outputs of Algorithm~\ref{algo:two_rank_one_update} are used to obtain a computational cost of $ O(p^2) $ in the preconditioning step as the eigenvalue modifications on $ \bB_k $, a diagonal block matrix, is more efficient than the direct eigenvalue modifications in (\ref{eq:f_k_sqrtm}) and (\ref{eq:f_k_add}). Algorithm~\ref{algo:two_rank_one_update} is the key that renders steps ii) and iii) in Subsection~\ref{subsect:Introduction} achievable at a computational cost of $ O(p^2) $. 
\begin{algorithm}[!htbp]
	\caption{Two rank-one updates of $ \obH_k $}
	\label{algo:two_rank_one_update}
	\begin{algorithmic}[1]
		\renewcommand{\algorithmicrequire}{\textbf{Input:}}
		\renewcommand{\algorithmicensure}{\textbf{Output:}}
		\REQUIRE matrices $ \bP_{k-1}, \bL_{k-1}, \bB_{k-1} $ in the symmetric indefinite factorization of $\obH_{k-1}$, scalars $ t_k, b_k $, and vectors $ \bu_k, \bv_k $ computed per Table~\ref{table:u_k_v_k}.
		\ENSURE matrices $ \bP_k, \bL_k, \bB_k $ in the symmetric indefinite factorization of $ \obH_k $ per (\ref{eq:LBL}).
		\STATE \textbf{set} $ \bP_k \gets \bP_{k-1}, \bL_k \gets \bL_{k-1}, \bB_k \gets t_k\bB_{k-1} $. 
		\STATE \textbf{update} $ \bP_k, \bL_k, \bB_k $ with the rank-one modifications $ b_k\tbu_k\tbu_k^\transpose $ with $ \tbu_k $ computed in (\ref{eq:u_k_tilde}) and $ -b_k\tbv_k\tbv_k^\transpose $ with $ \tbv_k $ computed in (\ref{eq:v_k_tilde}), using the updating procedure outlined in \cite{sorensen1977updating}. (Recall that the code is available at \url{https://github.com/jingyi-zhu/Fast2SPSA}.)
		\RETURN matrices $ \bP_k, \bL_k, \bB_k $. 
	\end{algorithmic}
\end{algorithm}

\begin{remark}
	Though $ \obH_k $ is not explicitly computed during each iteration, whenever needed it can be computed easily from its $ \bL\bB\bL^\transpose $ factorization, though with a computational cost of $ O(p^3) $, i.e, $ \obH_k = \bP_k^\transpose\bL_k\bB_k\bL_k^\transpose\bP_k $. This calculation yields the same $ \obH_k $ as (\ref{eq:H_overline}) or (\ref{eq:two_rank_one_update}). The $ \bL\bB\bL^\transpose $ factorization of $ \obH_0 $ requires a computational cost of, at most, $ O(p^3) $ \cite[Table 2]{bunch1971direct}. However, as a one-time sunk-in cost, it does not compromise the overall computational cost. Of course, we can avoid this bothersome issue by initializing $ \obH_0 $ to a diagonal matrix, which immediately gives $ \bP_0 = \bL_0 = \bB_0 = \bI $. Generally, the cost for initialization is trivial if $\obH_0$ is a diagonal matrix.
\end{remark}

\textbf{Preconditioning}
Given the factorization of the estimated Hessian information $ \obH_k $, which is symmetric yet potentially \emph{indefinite} (especially during early iterations), we aim to output a factorization of the Hessian approximation $ \oobH_k $ such that $ \oobH_k $ is symmetric and sufficiently positive definite, i.e., $ \lambda_{\min}(\oobH_k) \geq \tau $ for some constant $ \tau > 0 $. With the above $ \bL\bB\bL^\transpose $ factorization associated with $\obH_k$ obtained from the previous two rank-one modification steps, we can modify the eigenvalues of $ \bB_k $. Note that $\bB_k$ is a block diagonal matrix, so any eigenvalue modification can carried out inexpensively. This is in contrast to directly modifying the eigenvalues of $\obH_k$ to obtain $ \oobH_k $, which is computationally-costly as laid out in Subsection~\ref{subsect:p3cost}. Denote $ \obB_k $ as the modified matrix from $ \bB_k $. Note that $ \oobH_k $ and $ \obB_k $ are congruent as $ \oobH_k = (\bP_k^\transpose\bL_k)\obB_k(\bP_k^\transpose\bL_k)^\transpose $. By Sylvester's law of inertia, the positive definiteness of $ \oobH_k $ is guaranteed as long as $ \obB_k $ is positive definite. 

To modify the eigenvalues of $ \bB_k $, we borrow the ideas from the modified Newton's method \cite[pp. 50]{nocedal2006numerical} to set $ \lambda_j(\obB_k) = \max \set{\tau_k, |\uplambda_j(\bB_k)|} $ for $ j = 1, ..., p $, where $ \tau_k $ is a user-specified stability threshold, which is possibly data-dependent. A possible choice of the uniformly bounded $ \set{\tau_k} $ sequence in the Section~\ref{sec:numerical} is to set $ \tau_k = \max\{10^{-4}, 10^{-4}p\max_{1\leq j \leq p} |\uplambda_j(\bB_k)| \} $. The intuition behind the eigenvalue modification in Algorithm~\ref{algo:preconditioning} is to make $ \obB_k $ well-conditioned while behaving similarly to $ \bB_k $. The pseudo code of the preconditioning step is listed in Algorithm~\ref{algo:preconditioning}.
\begin{algorithm}[htbp]
	\caption{Preconditioning}
	\label{algo:preconditioning}
	\begin{algorithmic}[1]
		\renewcommand{\algorithmicrequire}{\textbf{Input:}}
		\renewcommand{\algorithmicensure}{\textbf{Output:}}
		\REQUIRE user-specified stability-threshold $ \tau_k > 0 $ and matrix $ \bB_k $ in the symmetric indefinite factorization of $ \obH_k $.
		\ENSURE matrix $ \bQ_k $ in the eigen-decomposition of $ \bB_k $ and the modified matrix $ \obLambda_k $.
		\STATE \textbf{apply} eigen-decomposition of $ \bB_k = \bQ_k \bLambda_k \bQ_k^\transpose $, where $ \bLambda_k = \text{diag}(\lambda_{k1}, ..., \lambda_{kp}) $ and $ \lambda_{kj} \equiv \lambda_j(\bB_k) $ for $ j = 1, ..., p $.
		\STATE \textbf{update} $ \obLambda_k = \text{diag}(\bar{\lambda}_{k1}, ..., \bar{\lambda}_{kp}) $ with $ \bar{\lambda}_{kj} = \max \set{\tau_k, |\uplambda_{kj}|} $ for $j = 1, ..., p$.
		\RETURN eigen-decomposition of $ \obB_k = \bQ_k \obLambda_k \bQ_k^\transpose $.
	\end{algorithmic}
\end{algorithm}

\begin{remark}
	Although the eigen-decomposition, in general, incurs an $ O(p^3) $ cost, the block diagonal structure of $ \bB_k $ allows such an operation to be implemented relatively inexpensively. In the worst-case scenario, $ \bB_k $ consists of $ p/2 $ diagonal blocks of size $ 2 \times 2 $, where eigen-decompositions are applied on each block separately leading to a total computational cost of $ O(p) $. For the sake of efficiency, the matrix $ \oobH_k $ is not explicitly computed. Whenever needed, however, it can be computed by $ \oobH_k = \bP_k^\transpose\bL_k\bQ_k\obLambda_k\bQ_k^\transpose\bL_k^\transpose\bP_k $ at a cost of $O(p^3)$. 
\end{remark}

Algorithm~\ref{algo:preconditioning} makes our approach different from \cite{spall2000adaptive}. We only modify the eigenvalues of $ \bLambda_k $ (or equivalently of $ \bB_k $), which indirectly affects the eigenvalues of $ \oobH_k $ in a non-trivial way. However, if one constructs $ \obH_k $ and $ \oobH_k $ from their factorization (formally unnecessary as mentioned above), Algorithm~\ref{algo:preconditioning} can be viewed as a function that maps $ \obH_k $ to a positive-definite $ \oobH_k $. In this sense, Algorithm~\ref{algo:preconditioning} is just a special choice of $ f_k(\cdot) $ in (\ref{eq:H_ooverline}) even though such a $ f _k(\cdot) $ is non-trivial and difficult to find. 

\textbf{Descent direction} After the preconditioning step, the descent direction $ \bd_k: \oobH_k \bd_k = \bG_k(\hbtheta_k) $ can be computed readily via one forward substitution with respect to (w.r.t.) the lower-triangular matrix $\bL_k$ and one backward substitution w.r.t. the upper-triangular matrix $\bL_k^\transpose$, as the decomposition $\oobH_k=\bP_k^\transpose\bL_k\bQ_k\obLambda_k\bQ_k^\transpose\bL_k^\transpose\bP_k $ is available. The estimate $ \hbtheta_k $ can then be updated as in (\ref{eq:theta_update_s}). Note that $ \oobH_k $ is not directly computed in any iteration, and the forward and backward substitutions are implemented through the terms in the $ \bL\bB\bL^\transpose $ factorization. Algorithm~\ref{algo:descent_direction} below summarizes the details.
\begin{algorithm}[H]
	\caption{Descent Direction Step}
	\label{algo:descent_direction}
	\begin{algorithmic}[1]
		\renewcommand{\algorithmicrequire}{\textbf{Input:}}
		\renewcommand{\algorithmicensure}{\textbf{Output:}}
		\REQUIRE gradient estimate $ \bG_k(\hbtheta_k) $, and matrices $ \bP_k, \bL_k, \bQ_k, \obLambda_k $ in the $ \bL\bB\bL^\transpose $ factorization of $ \oobH_k $.
		\ENSURE descent direction $ \bd_k $.
		\STATE \textbf{Solve} $ \bz$ by forward substitution such that $\bL_k\bz = \bP_k\bG_k(\hbtheta_k) $.
		\STATE \textbf{Compute} $ \bw $ such that $ \bw = \bQ_k\obLambda_k^{-1}\bQ_k^\transpose \bz $.
		\STATE \textbf{Solve} $ \by $ by backward substitution such that $ \bL_k^\transpose\by = \bw $.
		\RETURN $ \bd_k = \bP_k^\transpose\by $.
	\end{algorithmic}
\end{algorithm}
Given the triangular structure of $\bL_k$ and that both $\bP_k$ and $\bQ_k$ are permutation matrices, the computational cost of Algorithm~\ref{algo:descent_direction} is dominated by $O(p^2)$.

\subsection{Overall Algorithm (Second-Order SP) and Computational Complexity }
\label{subsec:complexity}

With the aforementioned steps, we present the \emph{complete} algorithm for implementing second-order SP in Algorithm~\ref{algo:2SP} below, which applies to 2SPSA/2SG/E2SPSA/E2SG. Complete computational complexity analysis for 2SPSA is also stated, and the suggestions for the user-specified inputs are listed in \cite[Sect. 7.8.2]{spall2005introduction}. Results for 2SG/E2SPSA/E2SG can be obtained similarly. 

\begin{algorithm}[!htbp]
	\caption{Efficient Second-order SP (applies to 2SPSA, 2SG, E2SPSA, and E2SG)}
	\label{algo:2SP}
	\begin{algorithmic}[1]
		\renewcommand{\algorithmicrequire}{\textbf{Input:}}
		\renewcommand{\algorithmicensure}{\textbf{Output:}}
		\REQUIRE initialization $ \hbtheta_0 $ and $ \bP_0, \bQ_0, \bB_0 $ in the symmetric indefinite factorization of $ \obH_0 $; user-specified stability-threshold $ \tau_k > 0 $; coefficients $ a_k, c_k, w_k $ and, for 2SPSA/E2SPSA, $ \tilde{c}_k $.
		\ENSURE terminal estimate $ \hbtheta_k $.
		\STATE \textbf{set} iteration index $ k = 0 $.
		\WHILE{terminating condition for $ \hbtheta_k $ has not been satisfied}
		\STATE \textbf{estimate} gradient $ \bG_k(\hbtheta_k) $ by (\ref{eq:gradient_estimate_2SPSA}) or (\ref{eq:gradient_estimate_2SG}). 
		\STATE \textbf{compute} $ t_k, b_k, \tbu_k $ and $ \tbv_k $ by (\ref{eq:u_k_tilde}), (\ref{eq:v_k_tilde}) and Table~\ref{table:u_k_v_k}.
		\STATE \textbf{update} the symmetric indefinite factorization of $ \obH_k $ by Algorithm~\ref{algo:two_rank_one_update}.
		\STATE \textbf{update} the symmetric indefinite factorization of $ \oobH_k $ by Algorithm~\ref{algo:preconditioning}.
		\STATE \textbf{compute} the descent direction $ \bd_k $ by Algorithm~\ref{algo:descent_direction}.
		\STATE \textbf{update} $ \hbtheta_{k+1} = \hbtheta_k - a_k\bd_k $.
		\STATE $ k \leftarrow k + 1 $.
		\ENDWHILE
		\RETURN $ \hbtheta_k $.
	\end{algorithmic}
\end{algorithm}

For the terminating conditions, the algorithm is set to stop when a pre-specified total number of function (applicable for 2SPSA and E2SPSA) or gradient (applicable for 2SG and E2SG) measurements is reached or the norms of the differences between several consecutive estimates are less than a pre-specified threshold. Note that, for each iteration, four noisy loss function measurements are required in the gradient-free case and three noisy gradient measurements are required in the gradient-based case.

The corresponding computational complexity analysis for Algorithm~\ref{algo:2SP} under the gradient-free case is summarized in Table~\ref{table:computational_complexity}. Analogously, the analysis can be carried out for the gradient-based case (2SG) and the feedback-based case (E2SPSA or E2SG). A floating-point operation is assumed to be either a summation or a multiplication, while transposition requires no FLOPs. For the updating $ \obH_k $ step in original 2SPSA, $ 3p^2 $ FLOPs are required per (\ref{eq:H_overline}) and $ 4p^2 $ FLOPs are required per (\ref{eq:H_hat}). In the proposed implementation, $ 10p $ FLOPs are required to get $ \tbu_k $ and $ \tbv_k $ per (\ref{eq:u_k_tilde}) and (\ref{eq:v_k_tilde}), respectively, and $ 22p^2/6 + O(p) $ FLOPs are required to update the symmetric indefinite factorization of $ \obH_k $ \cite[Thm. 2.1 ]{sorensen1977updating}. For the preconditioning step in original 2SPSA, if using (\ref{eq:f_k_sqrtm}), $ p^3 + p $ FLOPs are required to get $ \obH_k\obH_k + \delta_k\bI $ and additional $ 50p^3/3 + O(p^2) $ FLOPs are required for the matrix square root operation \cite{higham1987computing}. In the proposed implementation, at most $ 7p $ FLOPs are required to get an eigenvalue decomposition on $ \bB_k $ ($ 14 $ FLOPs for at most $ p/2 $ blocks of size $ 2 \times 2 $) and $ p $ FLOPs are required to update the eigenvalues of $ \bB_k $. For computing the descent direction $ \bd_k $ in the original 2SPSA, $ p^3/3 $ FLOPs are required to apply Cholesky decomposition for $ \oobH_k $ and $ 2p^2 $ FLOPs are required for the backward substitutions. In the proposed implementation, $ 4p^2 + 2p $ FLOPs are required to backward substitutions.


\begin{table}[!htbp]
	\caption{Computational complexity analysis in gradient-free case (2SPSA in Algorithm~\ref{algo:2SP}) Complexity cost shown in FLOPs.}
	\centering
	\begin{tabular}{|c|c|c|}
		\hline
		Leading Cost & Original 2SPSA & Proposed Implementation\\
		\hline\hline
		Update $ \obH_k $ & $ 7p^2 $ & $ 3.67p^2 + O(p) $ \\
		\hline
		Precondition $ \oobH_k $ & $ 17.67p^3 + O(p^2) $ & $ 8p $ \\
		\hline
		Descent direction $ \bd_k $ & $ 0.33p^3 + O(p^2) $ & $ 4p^2 + O(p) $ \\
		\hline\hline
		Total Cost & $ 18p^3 + O(p^2) $ & $7.67p^2 + O(p) $ \\
		\hline
	\end{tabular}\label{table:computational_complexity}
\end{table}

Table~\ref{table:computational_complexity} may not provide the lowest possible computational complexities, because a great deal of existing work on parallel computing\textemdash such as \cite{george1986parallel} on parallelization of Cholesky decomposition, \cite{deadman2012blocked} for computing principal matrix square root, and \cite{dongarra1987fully} for the symmetric eigenvalue problem\textemdash have tremendously accelerated the matrix-operation computing speed in modern data analysis packages. Nonetheless, even with such enhancements, the FLOPS counts remain $ O(p^3) $ in the standard methods. The bottom line is that our proposed implementation reduces the overall computational cost from $ O(p^3) $ to $ O(p^2) $.

\section{Theoretical Results and Practical Benefits}\label{sec:theory}

This section presents the theoretical foundation related to the almost sure convergence and the asymptotic normality of $ \hbtheta_k $. We also offer comments on the practical benefits from the proposed scheme. Lemma~\ref{lem:Ostrowski} provides the theoretical guarantee to connect the eigenvalues of $ \oobH_k $ and $ \obLambda_k $, which are important for proving Theorem~\ref{thm:H_barbar_property}--\ref{thm:H_barbar_uniform_bound} related to the matrix properties of $\obH_k$ and $\oobH_k$. 

\begin{lemma}\label{lem:Ostrowski}
	\textit{\cite[Thm. 4.5.9]{horn1990matrix}} Let $ \bA, \bS \in \real^{p \times p} $, with $ \bA $ being symmetric and $ \bS $ being \emph{nonsingular}. Let the eigenvalues of $ \bA $ and $ \bS\bA\bS^\transpose $ be arranged in \emph{nondecreasing} order. Let $ \sigma_1 \geq \cdots \geq \sigma_p > 0 $ be the singular values of $ \bS $. For each $ j = 1, \cdots, p $, there exists a positive number $ \zeta_j \in [\sigma_p^2, \sigma_1^2] $ such that $ \lambda_j(\bS\bA\bS^\transpose) = \zeta_j\lambda_j(\bA)$.
\end{lemma}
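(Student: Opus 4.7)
The plan is to deduce the identity from the Courant--Fischer min--max characterization of the eigenvalues of a symmetric matrix, combined with a change of variables driven by the singular values of $\bS$. Using the nondecreasing ordering, I would write $\lambda_j(\bM) = \min_{\dim V = j}\max_{0\neq x\in V} R_\bM(x)$ with $R_\bM(x) = x^\transpose \bM x/x^\transpose x$, together with the dual formula $\lambda_j(\bM) = \max_{\dim V = p-j+1}\min_{0\neq x\in V} R_\bM(x)$. Applying this to $\bM = \bS\bA\bS^\transpose$ and substituting $y = \bS^\transpose x$, which is a bijection of $\real^p$ since $\bS$ is nonsingular, I rewrite
\[
R_{\bS\bA\bS^\transpose}(x) \;=\; \frac{y^\transpose \bA y}{x^\transpose x} \;=\; \frac{y^\transpose \bA y}{y^\transpose(\bS\bS^\transpose)^{-1} y},
\]
and observe that the denominator is trapped in $[\sigma_1^{-2}\|y\|^2,\,\sigma_p^{-2}\|y\|^2]$ because the eigenvalues of $(\bS\bS^\transpose)^{-1}$ are $\sigma_1^{-2},\ldots,\sigma_p^{-2}$.

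Next I would establish the sandwich bound for $\lambda_j(\bA)\neq 0$ by two carefully chosen test subspaces. Let $\tilde V_\le$ be the $j$-dimensional span of eigenvectors of $\bA$ for $\lambda_1(\bA),\ldots,\lambda_j(\bA)$, and let $\tilde V_\ge$ be the $(p-j+1)$-dimensional span of eigenvectors for $\lambda_j(\bA),\ldots,\lambda_p(\bA)$; then plug $V_\le^\star = \bS^{-\transpose}\tilde V_\le$ into the min--max formula for $\lambda_j(\bS\bA\bS^\transpose)$ and $V_\ge^\star = \bS^{-\transpose}\tilde V_\ge$ into the max--min formula. Inside each test subspace, I split the calculation of $R_{\bS\bA\bS^\transpose}(x)$ according to whether the indefinite numerator $y^\transpose\bA y$ is nonnegative or negative, and combine it with the appropriate bracketing of the positive denominator obtained above. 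This yields $\sigma_p^2\lambda_j(\bA)\le \lambda_j(\bS\bA\bS^\transpose)\le\sigma_1^2\lambda_j(\bA)$ when $\lambda_j(\bA)>0$, and the flipped estimate $\sigma_1^2\lambda_j(\bA)\le\lambda_j(\bS\bA\bS^\transpose)\le\sigma_p^2\lambda_j(\bA)$ when $\lambda_j(\bA)<0$. In either sign regime, setting $\zeta_j = \lambda_j(\bS\bA\bS^\transpose)/\lambda_j(\bA)$ places $\zeta_j$ inside $[\sigma_p^2,\sigma_1^2]$ as required.

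For the remaining case $\lambda_j(\bA) = 0$, I invoke Sylvester's law of inertia, already quoted in Subsection~\ref{subsec:IMF}: since $\bA$ and $\bS\bA\bS^\transpose$ are congruent through the nonsingular $\bS$, they share the same number of zero eigenvalues, so under the common nondecreasing ordering the index $j$ lies in both matrices' zero block, forcing $\lambda_j(\bS\bA\bS^\transpose)=0$. Any $\zeta_j \in [\sigma_p^2,\sigma_1^2]$ then satisfies the claimed identity $0 = \zeta_j\cdot 0$ trivially.

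The main obstacle I anticipate is the sign bookkeeping when bounding the indefinite quadratic form $y^\transpose\bA y$ relative to the strictly positive form $y^\transpose(\bS\bS^\transpose)^{-1}y$: the direction of the elementary inequality between $y^\transpose\bA y / y^\transpose(\bS\bS^\transpose)^{-1} y$ and $\sigma_i^2\, y^\transpose\bA y/\|y\|^2$ reverses with the sign of the numerator, which is precisely why the sandwich brackets swap order when $\lambda_j(\bA)$ changes sign. Managing this cleanly inside the two Courant--Fischer applications, rather than trying to write a single universal chain of inequalities, is what keeps the proof short and avoids a proliferation of sub-cases.
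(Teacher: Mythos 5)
The paper does not actually prove this lemma: it is Ostrowski's theorem, quoted verbatim from \cite[Thm. 4.5.9]{horn1990matrix}, so there is no internal argument to compare yours against. Your Courant--Fischer proof is correct and is essentially the standard textbook derivation: the change of variables $y=\bS^\transpose x$, the two test subspaces $\bS^{-\transpose}\tilde V_{\le}$ and $\bS^{-\transpose}\tilde V_{\ge}$ (which have the right dimensions because $\bS^{-\transpose}$ is a bijection), the sign-dependent sandwich giving $\sigma_p^2\lambda_j(\bA)\le\lambda_j(\bS\bA\bS^\transpose)\le\sigma_1^2\lambda_j(\bA)$ for $\lambda_j(\bA)>0$ and the reversed bracket for $\lambda_j(\bA)<0$, and the disposal of $\lambda_j(\bA)=0$ by Sylvester's law of inertia (which fixes not only the count of zero eigenvalues but also their position in the nondecreasing ordering, since the negative counts agree too) all go through. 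One cosmetic correction: with $x=\bS^{-\transpose}y$ the denominator is $x^\transpose x=y^\transpose\bS^{-1}\bS^{-\transpose}y=y^\transpose(\bS^\transpose\bS)^{-1}y$, not $y^\transpose(\bS\bS^\transpose)^{-1}y$ as written; the two matrices are similar and share the spectrum $\{\sigma_i^{-2}\}$, so your bracketing of the denominator by $[\sigma_1^{-2}\|y\|^2,\,\sigma_p^{-2}\|y\|^2]$ is unaffected, but the formula should be stated with $\bS^\transpose\bS$.
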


Before presenting the main theorems, we first discuss the singular values of $ \bL_k $. Denote $ \{\sigma_i(\bL_k)\}_{i=1}^p $ as the singular values of $ \bL_k $. Also let $ \sigma_{\min}(\cdot) = \min_{1\leq i \leq p} \sigma_i(\cdot)$ and $ \sigma_{\max}(\cdot) = \max_{1\leq i \leq p} \sigma_i(\cdot) $. Since $ \bL_k $ is a unit lower triangular matrix, we have $ \lambda_j(\bL_k) = 1 $ for $ j = 1, .., p $ and $ \det(\bL_k) = 1 $. From the entry-wise bounds of $ \bL_k $ in Subsection~\ref{subsec:IMF}, we see that $ p \leq \norm{\bL_k}_F \leq 3p^2/2 - p/2 $ for all $ k $, where $\norm{\cdot}_F$ is the Frobenius norm of the argument matrix in $ \real^{p\times p} $. With the lower bound of $ \sigma_{\min}(\bL_k) $ \cite{yi1997note}, there exists a constant $ \underline{\sigma} > 0 $ such that $ \sigma_{\min}(\bL_k) \geq \underline{\sigma} $ for all $ k $. On the other hand, by the equivalence of the matrix norms, i.e, $ \sigma_{\max}(\bL_k) = \norm{\bL_k}_2 \leq \norm{\bL_k}_F $ for $ \norm{\cdot}_2 $ being the spectral norm, there exists a constant $ \overline{\sigma} > 0 $ such that $ \sigma_{\max}(\bL_k) \leq \overline{\sigma} $ for all $ k$. Both $ \underline{\sigma} $ and $ \overline{\sigma} $ are independent of the sample path for $ \bL_k $. By the Rayleigh-Ritz theorem \cite[Thm. 4.2.2]{horn1990matrix}, $ \be_1^\transpose(\bL_k\bL_k^\transpose)\be_1 = 1 $ implies that $ \sigma_{\min}(\bL_k) \leq 1 $ and $ \sigma_{\max}(\bL_k) \geq 1 $. Combined, all the singular values of $ \bL_k $ are bounded uniformly across $k$, i.e., $ \underline{\sigma} < \sigma_{\min}(\bL_k) \leq 1 \leq \sigma_{\max}(\bL_k) \leq \overline{\sigma} $. Let $ \kappa(\bL_k) $ be the condition number of $ \bL_k $, then $ 1 \leq \kappa(\bL_k) \leq \overline{\sigma} / \underline{\sigma} $.

Because the focus of Algorithm~\ref{algo:preconditioning} is to generate a positive definite $ \obB_k $ (or equivalently its eigen-decomposition), we replace $ \tau_k $ in Theorem~\ref{thm:H_barbar_property}--\ref{thm:H_barbar_uniform_bound} with some constant $ \underline{\tau}\in\left(0,\uptau_k\right] $ independent of the sample path for $ \bB_k $ for all $k$. Note that the substitution is solely for succinctness and does not affect the theoretical result that $\obB_k$ is positive definite. Theorem~\ref{thm:H_barbar_property} presents the key theoretical properties of $ \oobH_k $ satisfying the regularity conditions in \cite[C.6]{spall2000adaptive}. Based on Theorem~\ref{thm:H_barbar_property}, the strong convergence, $ \hbtheta_k \to \btheta^* $ and $ \obH_k \to \bH(\btheta^*) $, can be established conveniently, see Remark~\ref{rmk:strong_convergence}. 

\begin{theorem}\label{thm:H_barbar_property} Assume there exists a symmetric indefinite factorization $ \obH_k = \bP_k^\transpose\bL_k\bB_k\bL_k^\transpose\bP_k$.
	Given any constant $\underline{\uptau} \in\left(0,\uptau_k\right] $ for all $ k $, the matrix $ \oobH_k = \bP_k^\transpose\bL_k\bQ_k\obLambda_k\bQ_k^\transpose\bL_k^\transpose\bP_k $ with $ \bQ_k$ and $\obLambda_k $ returned from Algorithm~\ref{algo:preconditioning} satisfies the following properties:
	\begin{itemize}
		\item[(a)] $ \lambda_{\min}(\oobH_k) \geq \underline{\sigma}^2\underline{\tau} > 0 $.
		\item[(b)] $\oobH_k^{-1}$ exists a.s., $ c_k^2 \oobH_k ^{-1}\to \bm{0}$ a.s., and for some constants $\updelta, \uprho>0$, $ \mathbb{E} [ \| \oobH_k^{-1} \|^{2+\updelta} ]\le \uprho $.
	\end{itemize}
\end{theorem}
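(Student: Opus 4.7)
The plan is to reduce both claims to spectral bounds that follow from Lemma~\ref{lem:Ostrowski} (Ostrowski's theorem) together with the structural facts already established for $ \bL_k $, $ \bQ_k $, $ \bP_k $, and $ \obLambda_k $. The key observation is that, because $ \bP_k $ is a permutation matrix, conjugation by $ \bP_k $ preserves eigenvalues, so it suffices to analyze $ \bL_k\bQ_k\obLambda_k\bQ_k^\transpose\bL_k^\transpose $; and because $ \bQ_k $ is orthogonal (it comes from the eigen-decomposition of the symmetric matrix $ \bB_k $), the singular values of $ \bS_k \equiv \bL_k\bQ_k $ coincide with those of $ \bL_k $. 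The uniform bounds $ \underline{\sigma} \le \sigma_{\min}(\bL_k) \le 1 \le \sigma_{\max}(\bL_k) \le \overline{\sigma} $ derived in the paragraph preceding the theorem are the crucial ingredients.

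\textbf{Part (a).} First I would write $ \oobH_k = \bP_k^\transpose \bS_k \obLambda_k \bS_k^\transpose \bP_k $, so that $ \lambda_{\min}(\oobH_k) = \lambda_{\min}(\bS_k \obLambda_k \bS_k^\transpose) $. Applying Lemma~\ref{lem:Ostrowski} with $ \bA = \obLambda_k $ and $ \bS = \bS_k $, each eigenvalue of $ \bS_k\obLambda_k\bS_k^\transpose $ is of the form $ \zeta_j \lambda_j(\obLambda_k) $ for some $ \zeta_j \in [\sigma_{\min}^2(\bS_k),\sigma_{\max}^2(\bS_k)] \subseteq [\underline{\sigma}^2, \overline{\sigma}^2] $. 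Since Algorithm~\ref{algo:preconditioning} enforces $ \bar{\lambda}_{kj} = \max\{\tau_k, |\lambda_{kj}|\} \ge \tau_k \ge \underline{\tau} $ for every $ j $, all eigenvalues of $ \obLambda_k $ are at least $ \underline{\tau} $. Combining gives $ \lambda_{\min}(\oobH_k) \ge \underline{\sigma}^2 \underline{\tau} > 0 $, which is (a).

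\textbf{Part (b).} The positive lower bound on $ \lambda_{\min}(\oobH_k) $ established in (a) implies $ \oobH_k $ is positive definite (hence invertible) on every sample path, not merely almost surely. Moreover,
\begin{equation*}
\|\oobH_k^{-1}\|_2 \;=\; \frac{1}{\lambda_{\min}(\oobH_k)} \;\le\; \frac{1}{\underline{\sigma}^2 \underline{\tau}},
\end{equation*}
a \emph{deterministic} constant independent of $ k $ and of the sample path. Consequently $ \|c_k^2 \oobH_k^{-1}\| \le c_k^2/(\underline{\sigma}^2\underline{\tau}) \to 0 $ because $ c_k \downarrow 0 $ by the gain-sequence conditions, and for any $ \delta>0 $,
\begin{equation*}
\mathbb{E}\bigl[\|\oobH_k^{-1}\|^{2+\delta}\bigr] \;\le\; \bigl(\underline{\sigma}^2\underline{\tau}\bigr)^{-(2+\delta)} \;\equiv\; \rho < \infty,
\end{equation*}
so choosing any $ \delta>0 $ and the corresponding $ \rho $ yields the required moment bound.

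\textbf{Anticipated difficulty.} The arguments above are short once one accepts the three ingredients (orthogonality of $ \bP_k $ and $ \bQ_k $, uniform singular-value bounds on $ \bL_k $, and the $ \underline{\tau} $ floor on the eigenvalues of $ \obLambda_k $). The only real care needed is in justifying that $ \sigma_i(\bL_k\bQ_k)=\sigma_i(\bL_k) $ and that the uniform bounds $ \underline{\sigma},\overline{\sigma} $ from the preceding discussion do not depend on the sample path of the algorithm (only on $ p $ and the universal Bunch constant $ 2.7808 $); this is precisely what the cited entry-wise bound from \cite{bunch1977some} delivers. No probabilistic machinery is invoked because all bounds are pathwise, which in fact makes the statements in (b) stronger than the ``almost sure'' phrasing suggests.
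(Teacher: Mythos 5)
Your proposal is correct and follows essentially the same route as the paper: both rest on Lemma~\ref{lem:Ostrowski} applied to the congruence $\oobH_k = (\bP_k^\transpose\bL_k\bQ_k)\obLambda_k(\bP_k^\transpose\bL_k\bQ_k)^\transpose$, the uniform singular-value bounds on $\bL_k$, and the $\underline{\tau}$ floor on the eigenvalues of $\obLambda_k$, with part (b) following from the resulting deterministic, $k$-independent lower bound on $\lambda_{\min}(\oobH_k)$. You merely make explicit two steps the paper leaves implicit (that conjugation by the permutation $\bP_k$ preserves eigenvalues and that the orthogonal $\bQ_k$ leaves the singular values of $\bL_k$ unchanged), which is a welcome clarification rather than a different argument.
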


\begin{proof}
	For all $k$, it is easy to see that $ \lambda_{\min}(\obLambda_k) \geq \underline{\tau} > 0 $ implying $ \obLambda_k $ is positive definite. Since both $ \bQ_k $ and $ \bL_k $ are nonsingular, by Sylvester's law of inertia \cite{sylvester1852xix}, $ \oobH_k $ is also positive definite as $ \obLambda_k $ is positive definite. Moreover, by Lemma~\ref{lem:Ostrowski},
	\begin{equation}\label{eq:lambda_min_H_barbar}
	\lambda_{\min}(\oobH_k) \geq \sigma_{\min}^2(\bL_k)\lambda_{\min}(\obLambda_k) \geq \underline{\sigma}^2\underline{\tau} > 0 .
	\end{equation}
	Since $ \oobH_k $ has a constant lower bound for all its eigenvalues across $k$, property (b) follows.
\end{proof}

\begin{remark}\label{rmk:strong_convergence}
	Theorem~\ref{thm:H_barbar_property} guarantees that $ \oobH_k $ is positive definite, and therefore the estimates of $\btheta$ in the second-order method move in a descent-direction on average. Meeting property (b) is also necessary in showing the convergence results. Suppose the standard regularity conditions in \cite[Sect. III and IV]{spall2000adaptive} hold. To show the strong convergence, $ \hbtheta_k \to \btheta^* $ and $ \obH_k \to \bH(\btheta^*) $, we only need to verify that $ \oobH_k $ satisfies the regularity conditions in \cite[C.6]{spall2000adaptive} because the key difference between the original 2SPSA/2SG and our proposed method is effectively the preconditioning step. Theorem~\ref{thm:H_barbar_property} verifies the Assumption C.6 in \cite{spall2000adaptive} directly, and therefore we have $ \hbtheta_k \to \btheta^* $ a.s. and $ \obH_k \to \bH(\btheta^*) $ a.s. under both the 2SPSA and 2SG settings by \cite[Thms. 1 and 2]{spall2000adaptive}. 
\end{remark}

Theorem~\ref{thm:H_bar_property} discusses the connection between $ \obH_k $ and $ \oobH_k $ when $ k $ is sufficiently large. It also verifies a key condition when proving the asymptotic normality of $ \hbtheta_k $, see Remark~\ref{rmk:asymptotic_normality}.

\begin{theorem}\label{thm:H_bar_property}
	Assume $ \bH(\btheta^*) $ is positive definite. When choosing $ 0 < \underline{\tau} \leq \lambda_{\min}(\bH(\btheta^*)) / (2\overline{\sigma}^2) $, there exists a constant $ K_1 $ such that for all $ k > K_1 $, we have $ \oobH_k = \obH_k $.
\end{theorem}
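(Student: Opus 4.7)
The plan is to show that, for $k$ large, every eigenvalue of $\bB_k$ is already at least $\underline{\tau}$ in absolute value, so that the $\max\{\underline{\tau},|\lambda_{kj}|\}$ operation in Algorithm~\ref{algo:preconditioning} becomes a no-op. When that happens, $\obLambda_k=\bLambda_k$, whence $\obB_k=\bB_k$ and consequently $\oobH_k = \bP_k^\transpose\bL_k\bB_k\bL_k^\transpose\bP_k = \obH_k$.

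First I would invoke the strong convergence result recorded in Remark~\ref{rmk:strong_convergence}: under the standing assumptions, $\obH_k\to\bH(\btheta^*)$ almost surely. Since $\bH(\btheta^*)$ is positive definite, continuity of the smallest eigenvalue (Weyl's inequality gives $|\lambda_{\min}(\obH_k)-\lambda_{\min}(\bH(\btheta^*))|\le\|\obH_k-\bH(\btheta^*)\|_2$) yields some (sample-path dependent) index $K_1$ such that for all $k>K_1$,
\begin{equation*}
\lambda_{\min}(\obH_k)\ \ge\ \tfrac{1}{2}\,\lambda_{\min}(\bH(\btheta^*)).
\end{equation*}

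Next I would transfer this lower bound from $\obH_k$ down to $\bB_k$. Permutation is orthogonal, so $\bP_k\obH_k\bP_k^\transpose$ has the same spectrum as $\obH_k$, and this matrix equals $\bL_k\bB_k\bL_k^\transpose$. Applying Lemma~\ref{lem:Ostrowski} with $\bA=\bB_k$ and $\bS=\bL_k$ (whose singular values are bounded above by $\overline{\sigma}$ uniformly in $k$, as established just before the theorem) gives, for each $j$,
\begin{equation*}
\lambda_j(\obH_k)\ =\ \zeta_{k,j}\,\lambda_j(\bB_k)\quad\text{with}\quad \zeta_{k,j}\in[\underline{\sigma}^2,\overline{\sigma}^2].
\end{equation*}
For $k>K_1$ the left-hand side is positive for every $j$, so every $\lambda_j(\bB_k)$ is positive, and
\begin{equation*}
\lambda_j(\bB_k)\ =\ \frac{\lambda_j(\obH_k)}{\zeta_{k,j}}\ \ge\ \frac{\lambda_{\min}(\obH_k)}{\overline{\sigma}^2}\ \ge\ \frac{\lambda_{\min}(\bH(\btheta^*))}{2\overline{\sigma}^2}\ \ge\ \underline{\tau},
\end{equation*}
where the last inequality uses the hypothesis on $\underline{\tau}$.

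Finally, since every $\lambda_{kj}=\lambda_j(\bB_k)$ satisfies $\lambda_{kj}\ge\underline{\tau}>0$, we have $|\lambda_{kj}|=\lambda_{kj}$ and $\max\{\underline{\tau},|\lambda_{kj}|\}=\lambda_{kj}$, so Step~2 of Algorithm~\ref{algo:preconditioning} leaves $\bLambda_k$ unchanged; i.e.\ $\obLambda_k=\bLambda_k$, hence $\obB_k=\bQ_k\obLambda_k\bQ_k^\transpose=\bB_k$, and therefore $\oobH_k=\obH_k$ for every $k>K_1$. The main subtlety, rather than any hard estimate, is being careful that the $K_1$ produced here is sample-path dependent (as is standard under almost-sure convergence) and that the uniform bound $\overline{\sigma}$ on $\sigma_{\max}(\bL_k)$ is indeed independent of $k$, which is exactly what the entry-wise bound of \cite{bunch1977some} recalled in Subsection~\ref{subsec:IMF} buys us.
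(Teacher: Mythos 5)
Your proposal is correct and follows essentially the same route as the paper's own proof: use the almost-sure convergence $\obH_k\to\bH(\btheta^*)$ to get $\lambda_{\min}(\obH_k)\ge\lambda_{\min}(\bH(\btheta^*))/2$ for large $k$, then apply Lemma~\ref{lem:Ostrowski} to push this bound down to $\lambda_{\min}(\bLambda_k)\ge\lambda_{\min}(\obH_k)/\overline{\sigma}^2\ge\underline{\tau}$, so the preconditioning step is a no-op. Your added remarks on Weyl's inequality and the sample-path dependence of $K_1$ are sound elaborations of what the paper leaves implicit.
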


\begin{proof}
	By Remark~\ref{rmk:strong_convergence}, since $ \obH_k \to \bH(\btheta^*) $ a.s., there exists an integer $K_1$ such that for all $ k > K_1 $, $ \lambda_{\min}(\obH_k) \geq \lambda_{\min}(\bH(\btheta^*)) / 2 > 0 $. By Lemma~\ref{lem:Ostrowski}, we can achieve a lower bound for the eigenvalues of $ \bLambda_k $ as
	\begin{equation*}
	\lambda_{\min}(\bLambda_k) \geq \frac{\lambda_{\min}(\obH_k)}{\sigma_{\max}^2(\bL_k)} \geq \frac{\lambda_{\min}(\obH_k)}{\overline{\sigma}^2} \geq \underline{\tau}.
	\end{equation*}
	Therefore, for all $ k > K_1$, $ \obLambda_k = \bLambda_k $ and consequently $ \oobH_k = \obH_k $.
\end{proof}

\begin{remark}\label{rmk:asymptotic_normality}
	Theorem~\ref{thm:H_bar_property} shows that when $ k $ is large (the estimated Hessian $ \obH_k $ is sufficiently positive definite), the proposed preconditioning step will automatically make $ \oobH_k = \obH_k $, which satisfies one of the key required conditions for the asymptotic normality of $ \hat{\btheta}_k $ in \cite{spall2000adaptive}. Besides the additional regularity conditions in \cite[C.10--12]{spall2000adaptive}, we are required to verify that
	$
	\oobH_k - \obH_k \to \bm{0}~\text{a.s.},
	$
	which can be inferred by Theorem~\ref{thm:H_bar_property}. Following \cite[Thm. 3]{spall2000adaptive}, when the gain sequences have the standard form $ a_k = a/(A+k+1)^\alpha $ and $ c_k = c/(k+1)^\gamma $, the asymptotic normality of $ \hbtheta_k $ gives
	\begin{equation*}
	\begin{split}
	k^{(\alpha-2\gamma)/2}(\hbtheta_k - \btheta^*) \stackrel{\text{dist}}{\longrightarrow} N(\bm{\upmu}, \bm{\Omega}) &\quad \text{for 2SPSA,}\\
	k^{\alpha/2}(\hbtheta_k - \btheta^*) \stackrel{\text{dist}}{\longrightarrow} N(\bm{0}, \bm{\Omega'}) &\quad\text{for 2SG,}
	\end{split}
	\end{equation*}
	where the specifications of $ \alpha, \gamma, \bm{\upmu}, \bm{\Omega}$ and $ \bm{\Omega'} $ are available in \cite{spall2000adaptive}. Under the E2SPSA/E2SG settings, the convergence and asymptotic results can be derived analogously from \cite[Thms. 1--4]{spall2009feedback}.
\end{remark}

Because an ill-conditioned matrix may cause an excessive step-size in recursion (\ref{eq:theta_update_s}), leading to slow convergence \cite{li2018preconditioned}, we need to make sure that the resulting $\oobH_k $ (or its equivalent factorization) is not only positive definite but also numerically favorable. Theorem~\ref{thm:H_barbar_uniform_bound} below shows that changing the eigenvalues of $ \bLambda_k $ does not lead to the eigenvalues of $ \oobH_k $ becoming either too large or too small.

\begin{theorem}\label{thm:H_barbar_uniform_bound} Assume the eigenvalues of $\bH\parenthese{\btheta^*}$ are bounded uniformly such that $0< \underline{\uplambda}^*<\abs{\uplambda_j\parenthese{\bH\parenthese{\btheta^*}}}<\overline{\uplambda}^*<\infty $ for $j=1,...,p$ for all $k$. Then there exists some $K_2$ such that for $ k>K_2 $, the eigenvalues and condition number of $ \oobH_k $ are also bounded uniformly.
\end{theorem}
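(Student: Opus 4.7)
The plan is to leverage the two earlier results: Theorem~\ref{thm:H_bar_property}, which asserts that $\oobH_k=\obH_k$ eventually almost surely, and the strong-convergence statement $\obH_k\to\bH(\btheta^*)$ a.s.\ recorded in Remark~\ref{rmk:strong_convergence}. Once those are in hand, the result becomes a standard perturbation argument on eigenvalues.

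First, choose $\underline{\uptau}\in(0,\lambda_{\min}(\bH(\btheta^*))/(2\overline{\sigma}^2)]$ so that Theorem~\ref{thm:H_bar_property} applies, yielding an integer $K_1$ past which $\oobH_k=\obH_k$ almost surely. Next, invoke $\obH_k\to\bH(\btheta^*)$ a.s.\ together with a continuity-of-eigenvalues bound, e.g.\ Weyl's inequality in the form $|\lambda_j(\obH_k)-\lambda_j(\bH(\btheta^*))|\le\|\obH_k-\bH(\btheta^*)\|_2$ for each $j$. Since the right-hand side vanishes almost surely, there exists an (a.s.\ finite) $K_2\ge K_1$ such that for all $k>K_2$ one has $\|\obH_k-\bH(\btheta^*)\|_2<\underline{\uplambda}^*/2$. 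Combining this with the uniform spectral bounds on $\bH(\btheta^*)$ assumed in the theorem yields
\[
\tfrac{1}{2}\underline{\uplambda}^*\;\le\;\lambda_{\min}(\obH_k)\;\le\;\lambda_{\max}(\obH_k)\;\le\;\overline{\uplambda}^*+\tfrac{1}{2}\underline{\uplambda}^*
\]
for every $k>K_2$, and the equality $\oobH_k=\obH_k$ transports these bounds to $\oobH_k$ directly. The condition number then satisfies $\kappa(\oobH_k)\le(2\overline{\uplambda}^*+\underline{\uplambda}^*)/\underline{\uplambda}^*$, which is a constant independent of $k$.

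The only subtlety I anticipate is the interpretation of ``bounded uniformly'' in the stochastic sense: $K_2$ is a sample-path-dependent integer, so the bounds hold almost surely for $k>K_2(\omega)$ rather than deterministically across all sample paths simultaneously. If the authors want a deterministic uniform bound one would need an additional step, for instance combining Theorem~\ref{thm:H_barbar_property}(b) (which gives a uniform moment bound on $\|\oobH_k^{-1}\|$) with an analogous uniform upper bound on $\|\oobH_k\|$ obtained from the entrywise boundedness of $\bL_k$ and the boundedness of $\obLambda_k$ from Algorithm~\ref{algo:preconditioning}. I would phrase the main argument in the almost-sure sense, matching the convergence mode used throughout Section~\ref{sec:theory}, and note in passing that the constants depend only on $\underline{\uplambda}^*$, $\overline{\uplambda}^*$, $\underline{\sigma}$, $\overline{\sigma}$, and $\underline{\uptau}$, none of which depend on $k$.
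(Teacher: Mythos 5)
Your argument is internally sound, but it proves a narrower statement than the theorem as written, and the route is genuinely different from the paper's. You reduce everything to Theorem~\ref{thm:H_bar_property}, which only applies under the extra hypothesis $0<\underline{\uptau}\leq \lambda_{\min}(\bH(\btheta^*))/(2\overline{\sigma}^2)$. Theorem~\ref{thm:H_barbar_uniform_bound} imposes no such smallness condition on $\underline{\uptau}$ (and in practice one cannot verify it, since $\lambda_{\min}(\bH(\btheta^*))$ is unknown and the paper even suggests data-dependent thresholds $\uptau_k$ in Section~\ref{sec:numerical}). If $\underline{\uptau}$ exceeds that threshold, the identity $\oobH_k=\obH_k$ need not hold for large $k$ --- the clipping $\bar{\lambda}_{kj}=\max\{\uptau_k,|\uplambda_{kj}|\}$ can remain active indefinitely --- and your ``transport'' step collapses. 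So there is a concrete step that fails outside the regime where Theorem~\ref{thm:H_bar_property} applies.

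The paper avoids this by never invoking Theorem~\ref{thm:H_bar_property}. It uses the a.s.\ convergence $\obH_k\to\bH(\btheta^*)$ only to get uniform bounds $\underline{\uplambda}<|\lambda_j(\obH_k)|<\overline{\uplambda}$ for $k>K_2$, then applies Lemma~\ref{lem:Ostrowski} twice: once to $\obH_k=\bP_k^\transpose\bL_k\bB_k\bL_k^\transpose\bP_k$ to bound the eigenvalues of $\bB_k$ in terms of those of $\obH_k$ and the (uniformly bounded) singular values of $\bL_k$, and once to $\oobH_k=\bP_k^\transpose\bL_k\obB_k\bL_k^\transpose\bP_k$ after pushing those bounds through the explicit modification $\max\{\underline{\uptau},\cdot\}$. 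This yields
$\lambda_{\min}(\oobH_k)\geq\underline{\sigma}^2\max\{\underline{\uptau},\underline{\uplambda}/\overline{\sigma}^2\}$ and
$\lambda_{\max}(\oobH_k)\leq\overline{\sigma}^2\max\{\underline{\uptau},\overline{\uplambda}/\underline{\sigma}^2\}$,
hence a uniform condition-number bound, for \emph{any} admissible $\underline{\uptau}$, including when the preconditioning is actively altering eigenvalues. To repair your proof you would either need to add the hypothesis on $\underline{\uptau}$ (weakening the theorem) or insert the Ostrowski-based argument for the clipped case. Your closing observation that $K_2$ is sample-path dependent, so the bounds are almost-sure rather than deterministic, is fair and applies equally to the paper's proof.
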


\begin{proof}
	Again by Remark~\ref{rmk:strong_convergence}, since $\obH_k\to \bH\parenthese{\btheta^*}$ a.s., therefore for all $k>K_2$, the eigenvalues of $ \obH_k $ are bounded uniformly in the sense that $ \underline{\lambda} < |\lambda_j(\obH_k)| < \overline{\lambda} $ for $ j = 1, ..., p $, where $ \underline{\lambda} = \underline{\uplambda}^*/2 $ and $ \overline{\lambda} =2 \overline{\uplambda}^*$ are constants independent of the sample path for $ \obH_k $. 
	Given $ \obH_k = \bP_k\bL_k\bB_k\bL_k^T\bP_k $, by Lemma~\ref{lem:Ostrowski},
	\begin{equation*}
	\frac{\lambda_{\min}(\obH_k)}{\sigma_{\max}^2(\bL_k)} \leq \lambda_{\min}(\bB_k) \leq \frac{\lambda_{\min}(\obH_k)}{\sigma_{\min}^2(\bL_k)},
	\end{equation*}
	and
	\begin{equation*}
	\frac{\lambda_{\max}(\obH_k)}{\sigma_{\max}^2(\bL_k)} \leq \lambda_{\max}(\bB_k) \leq \frac{\lambda_{\max}(\obH_k)}{\sigma_{\min}^2(\bL_k)}.
	\end{equation*}
	Similarly, since $ \oobH_k = \bP_k\bL_k\obB_k\bL_k^T\bP_k $,
	\begin{equation*}
	\begin{split}
	\lambda_{\min}(\oobH_k) &
	\geq \sigma_{\min}^2(\bL_k) \lambda_{\min}(\obB_k)\\
	&\geq \sigma_{\min}^2(\bL_k)\max\left\{\underline{\tau},\frac{\lambda_{\min}(\obH_k)}{\sigma_{\max}^2(\bL_k)}\right\}\\
	&
	\geq \underline{\sigma}^2\max\left\{\underline{\tau},\frac{\underline{\lambda}}{\overline{\sigma}^2}\right\},
	\end{split}
	\end{equation*}
	\begin{equation*}
	\begin{split}
	\lambda_{\max}(\oobH_k)&
	\leq \sigma_{\max}^2(\bL_k) \lambda_{\max}(\obB_k)\\
	&\leq \sigma_{\max}^2(\bL_k)\max\left\{\underline{\tau},\frac{\lambda_{\max}(\obH_k)}{\sigma_{\min}^2(\bL_k)}\right\}\\
	&\leq \overline{\sigma}^2\max\left\{\underline{\tau},\frac{\overline{\lambda}}{\underline{\sigma}^2}\right\},
	\end{split}
	\end{equation*}
	where $ \kappa(\cdot) $ is the condition number of the matrix argument. Since $ \underline{\sigma}^2, \overline{\sigma}^2, \underline{\lambda} $, and $ \overline{\lambda} $ are all constants specified before running the algorithm, the eigenvalues of $ \oobH_k $ are bounded uniformly across $ k > K_2 $.
	
	Moreover, for the condition number of $ \oobH_k $, we have
	\begin{equation*}
	\kappa(\oobH_k)
	\leq \frac{\sigma_{\max}^2(\bL_k)}{\sigma_{\min}^2(\bL_k)}\frac{\max\left\{\underline{\tau},\lambda_{\max}(\obH_k)/\sigma_{\min}^2(\bL_k)\right\}}{\max\left\{\underline{\tau},\lambda_{\min}(\obH_k)/\sigma_{\max}^2(\bL_k)\right\}}.
	\end{equation*}
	Hence the condition number of $ \oobH_k $ is also bounded uniformly across $k>K_2$.
\end{proof}

\begin{remark}
	Theorem~\ref{thm:H_barbar_uniform_bound} is highly desired for the preconditioning step since it is to ensure the numerical stability. Recall that the preconditioning step listed in Algorithm~\ref{algo:preconditioning} modifies the eigenvalues of $ \obH_k $ by modifying the eigenvalues of $ \bB_k $. This modification is desirable since the eigenvalues of $ \oobH_k $ are controllable, i.e., a bound for $ \lambda_j(\oobH_k) $ uniformly for sufficiently large $k$ under a given size $ p $ can be obtained. The controlled condition number in Theorem~\ref{thm:H_barbar_uniform_bound} differs from the original preconditioning procedure as in Eq. (\ref{eq:f_k_add}), which does not control the condition number of $\overline{\overline{\bH}}_k$. 
	
\end{remark}

\section{Discussion} \label{sec:discussion}
This short section discusses two practical questions regarding Algorithm~\ref{algo:2SP} that produces the updated estimate of $\btheta$. 1) What is the difference between the standard adaptive SPSA-based method and the proposed algorithm if $ \obB_k $ (or $ \obH_k $) is sufficiently positive definite? 2) How to recover $ \obH_k $ at any $ k $?

In the ideal case, if $ \bB_k $ (or $ \obH_k $) is assumed to always be positive definite, the preconditioning step becomes unnecessary and we can directly set the symmetric indefinite factorization of $ \obH_k $ as the symmetric indefinite factorization of $ \oobH_k $, i.e., $ \bLambda_k = \obLambda_k $. In this scenario, the proposed method is identical to the original 2SPSA. However, because of the symmetric indefinite factorization, the overall computational cost remains at $ O(p^2) $ as in Table~\ref{table:computational_complexity} and it is still favorable relative to the original 2SPSA, which incurs a computational cost of $ O(p^3) $ due to the Gaussian elimination of $ \oobH_k $ in computing the descent direction $ \bd_k $. As mentioned in Sect. \ref{subsect:p3cost}, however, \cite{rastogi2016efficient} uses the matrix inversion lemma to show that the computational cost can be reduced to $ O(p^2) $ as well. Comparing with \cite{rastogi2016efficient}, which directly updates the matrix $ \obH_k^{-1} $ using the matrix inverse lemma, our proposed method has more control over the eigenvalues of $ \obH_k $ and performs well even when $ \obH_k $ is ill-conditioned.

The second aspect is that both $ \obH_k $ and $ \oobH_k $ are never explicitly computed during each iteration. By maintaining the corresponding factorization, we avoid the expensive matrix multiplications and gain a much faster way to achieve second-order convergence. However, whenever needed, either $ \obH_k $ or $ \oobH_k $ can be directly computed from the factorizations at a cost of $ O(p^3) $, see Subsection~\ref{subsec:algo}. 

\section{Numerical studies} \label{sec:numerical}

In this section, we demonstrate the strength of the proposed algorithms by minimizing the skewed-quartic function \cite{spall2000adaptive} using the efficient 2SPSA/E2SPSA and training a neural network using the efficient 2SG.

\subsection{Skewed-Quartic Function}

We consider the following skewed-quartic function used in \cite{spall2000adaptive} to show the performance of the efficient 2SPSA/E2SPSA:
\begin{equation*}
L(\btheta) = \btheta^\transpose\bB^\transpose\bB\btheta+0.1
\sum_{i=1}^{p} (\bB\btheta)_i^3 +0.01 \sum_{i=1}^{p}
(\bB\btheta)_i^4,
\end{equation*}
where $ (\cdot)_i $ is the $i$-th component of the argument vector, and $ \bB $ is such that $ p\bB $ is an upper triangular matrix of all $1$'s. The additive noise in $ y(\cdot) $ is independent $\mathcal{N}\parenthese{0,0.05^2}$, i.e., $ y(\btheta) = L(\btheta) + \varepsilon$, where $ \varepsilon \sim \mathcal{N}\parenthese{0,0.05^2} $. It is easy to check that $ L(\btheta) $ is strictly convex with a unique minimizer $ \btheta^* = \bzero $ such that $L(\btheta^*) = 0$.

For the preconditioning step in the original 2SPSA/E2SPSA, we choose $ \oobH_k = \bm{f}_k(\obH_k) = (\obH_k\obH_k + 10^{-4}e^{-k} \bI)^{1/2} $, which satisfies the definition of $ \bm{f}_k(\cdot) $ in (\ref{eq:f_k_sqrtm}) since $ \delta_k = 10^{-4}e^{-k} \to 0 $. In the efficient 2SPSA/E2SPSA, we choose $ \obLambda_k = \text{diag}(\bar{\lambda}_{k1}, ..., \bar{\lambda}_{kp}) $ with $ \bar{\lambda}_{kj} = \max \{10^{-4}$, $10^{-4}p\max_{1\leq i \leq p} |\lambda_{ki}|, |\lambda_{kj}|\} $ for all $ j $, which is consistent with the suggestion in \cite[pp. 118]{sorensen1977updating} and satisfies Theorem~\ref{thm:H_barbar_property}. To guard against unstable steps during the iteration process, a blocking step is added to reset $ \hbtheta_{k+1} $ to $ \hbtheta_k $ if $ \norm{\hbtheta_{k+1} - \hbtheta_k} \geq 1 $. We choose an initial value $ \hbtheta_0 = [1, 1, \dots, 1]^\transpose $. 

We show three plots below. Figures \ref{fig:loss_2SPSA} and \ref{fig:loss_E2SPSA} illustrate how the efficient method here provides essentially the same solution in terms of the loss function values as the $O(p^3)$ methods in \cite{spall2000adaptive} and \cite{spall2009feedback} (2SPSA and feedback and weighting-based E2SPSA). Figure 4 illustrates how the $O(p^3)$ vs. $O(p^2)$ FLOPS-based cost in Table~\ref{table:computational_complexity} above is manifested in overall runtimes.

Figure~\ref{fig:loss_2SPSA} plots the normalized loss function values $ [L(\hbtheta_k) - L(\btheta^*)] / [L(\hbtheta_0) - L(\btheta^*)] $ of the original 2SPSA and the efficient 2SPSA averaged over 20 independent replicates for $ p = 100 $ and number of iterations $ N = 50,000 $. Similar to the numerical studies in \cite{spall2009feedback}, the gain sequences of the two algorithms are chosen to be $ a_k = a/(A+k+1)^{0.602} $, $ c_k = \tilde{c}_k = c/(k+1)^{0.101} $, and $ w_k = w/(k+1)^{0.501} $ where $ a = 0.04, A = 1000, c = 0.05 $, and $ w = 0.01 $ following the standard guidelines in \cite{spall1998implementation}. 

\begin{figure}[!t]
	\centering
	\includegraphics[width=\linewidth]{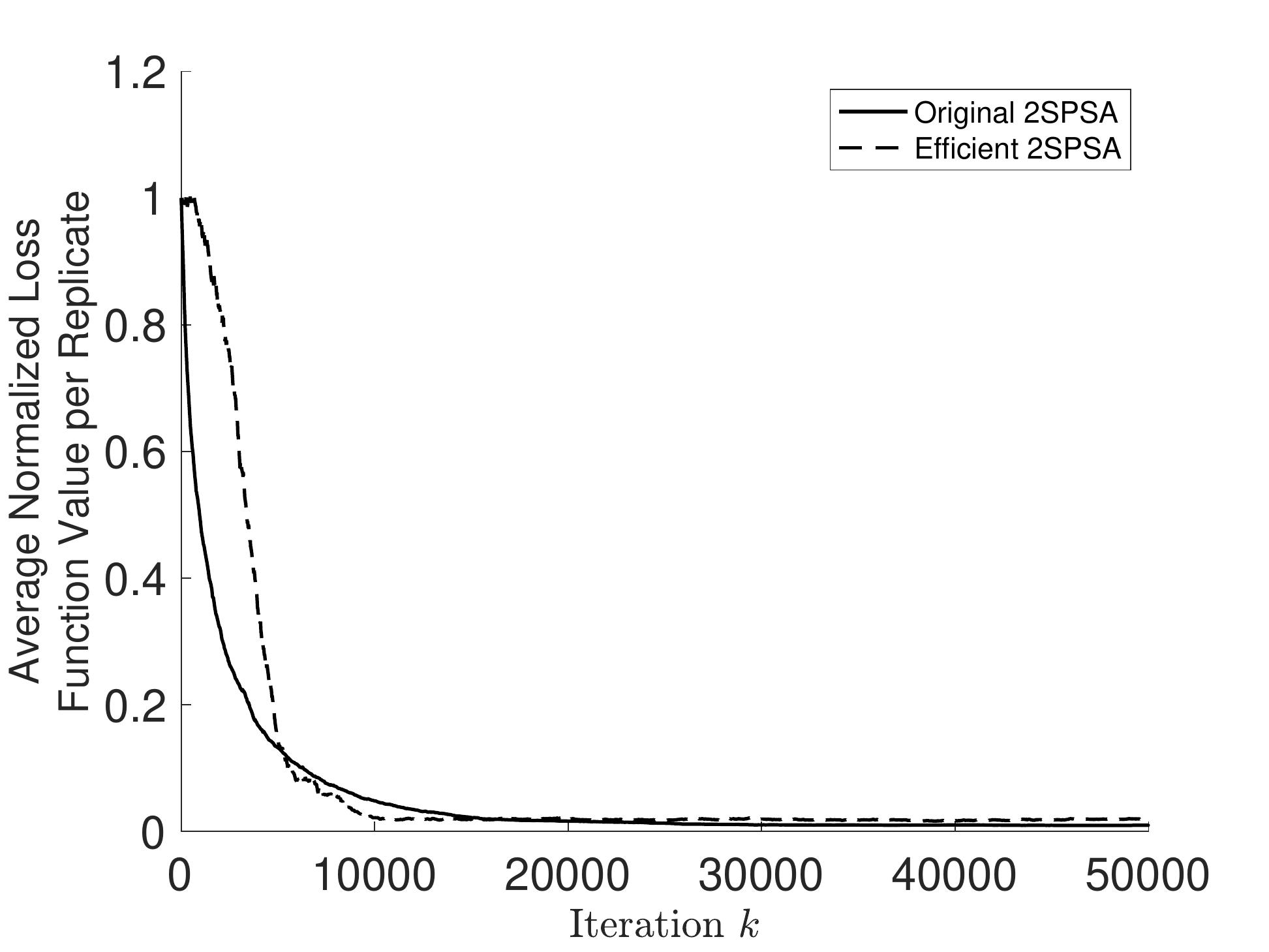}
	\caption{Similar performance of algorithms with respect to loss values (\emph{different} run times). Normalized terminal loss $ [L(\hbtheta_k) - L(\btheta^*)] / [L(\hbtheta_0) - L(\btheta^*)] $ of the original 2SPSA and the efficient 2SPSA averaged over 20 replicates for $ p = 100 $.}
	\label{fig:loss_2SPSA}
\end{figure}

Figure~\ref{fig:loss_E2SPSA} compares the normalized loss function values $ [L(\hbtheta_k) - L(\btheta^*)] / [L(\hbtheta_0) - L(\btheta^*)] $ of the standard E2SPSA and the efficient E2SPSA averaged over 10 independent replicates for $ p = 10 $ and number of iterations $ N = 10,000 $. The gain sequences of the two algorithms are chosen to have the form $ a_k = a/(A+k+1)^{0.602} $, $ c_k = \tilde{c}_k = c/(k+1)^{0.101} $, and $ w_k = w/(k+1)^{0.501} $ where $ a = 0.3, A = 50 $, and $ c = 0.05 $. The weight sequence $ w_k = \tilde{c}_k^2c_k^2/[\sum_{i=0}^{k}(\tilde{c}_i^2c_i^2 )]$ is set according to the optimal weight in \cite[Eq. (4.2)]{spall2009feedback}. 

\begin{figure}[!t]
	\centering
	\includegraphics[width=\linewidth]{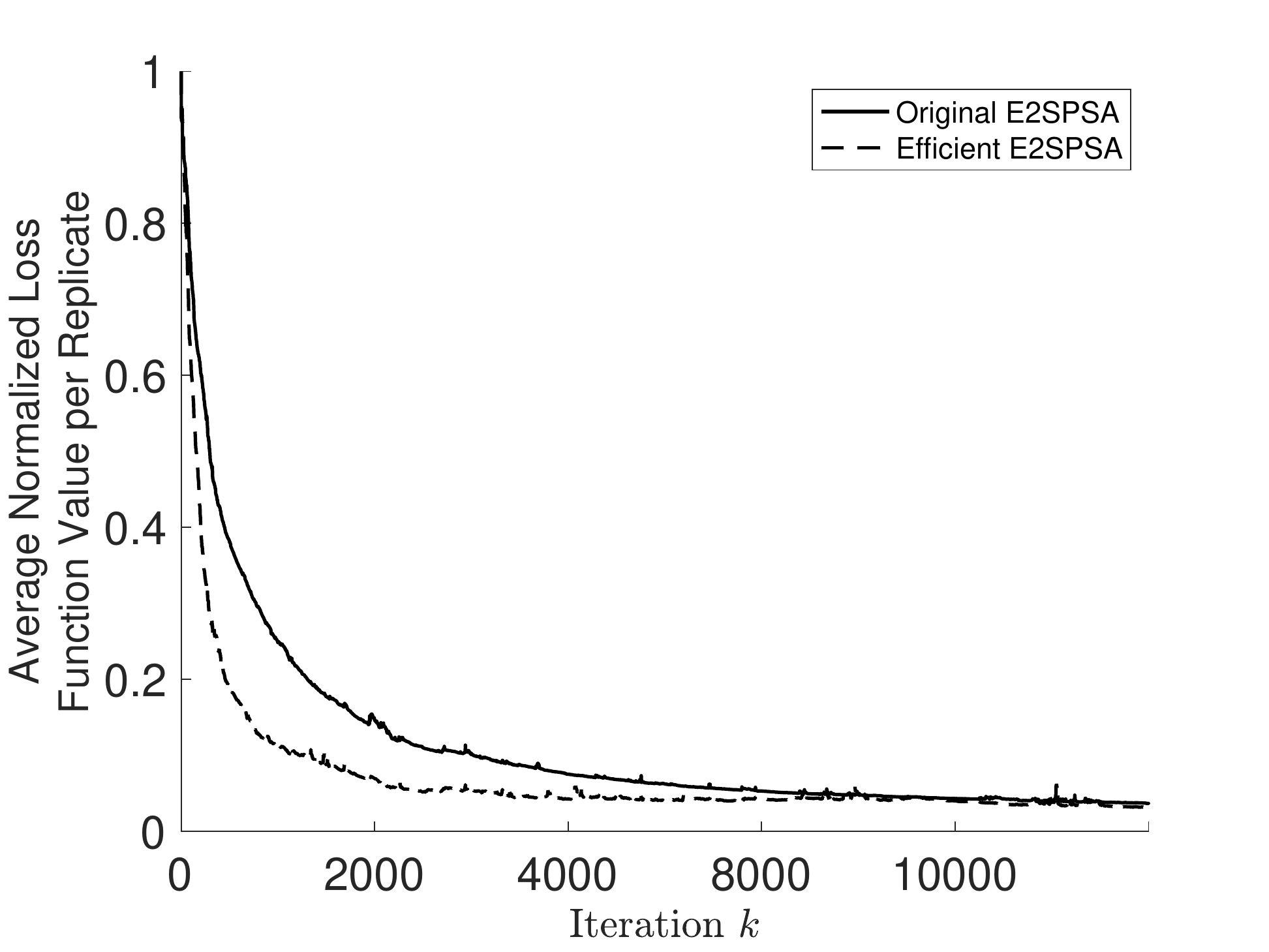}
	\caption{Similar performance of algorithms with respect to loss values (\emph{different} run times). Normalized terminal loss $ [L(\hbtheta_k) - L(\btheta^*)] / [L(\hbtheta_0) - L(\btheta^*)] $ of the original E2SPSA and the efficient E2SPSA averaged over 10 replicates for $ p = 10 $. }
	\label{fig:loss_E2SPSA}
\end{figure}

In the above comparisons, the loss function decreases significantly for all the dimensions with only noisy loss function measurements available. We see that the two implementations of E2SPSA provide close to the same accuracy for $1000$ or more iterations, although at a computing cost difference of $O(p^2)$ versus $O(p^3)$. Note that the differences (across $k$) between the original 2SPSA and the efficient 2SPSA/E2SPSA in Figure~\ref{fig:loss_E2SPSA} can be made arbitrarily small by picking an appropriate $ \bm{f}_k(\cdot) $ (or equivalently $ \oobH_k $) in the original 2SPSA, although such a choice might be non-trivial.

\begin{figure}[!t]
	\centering
	\includegraphics[width=\linewidth]{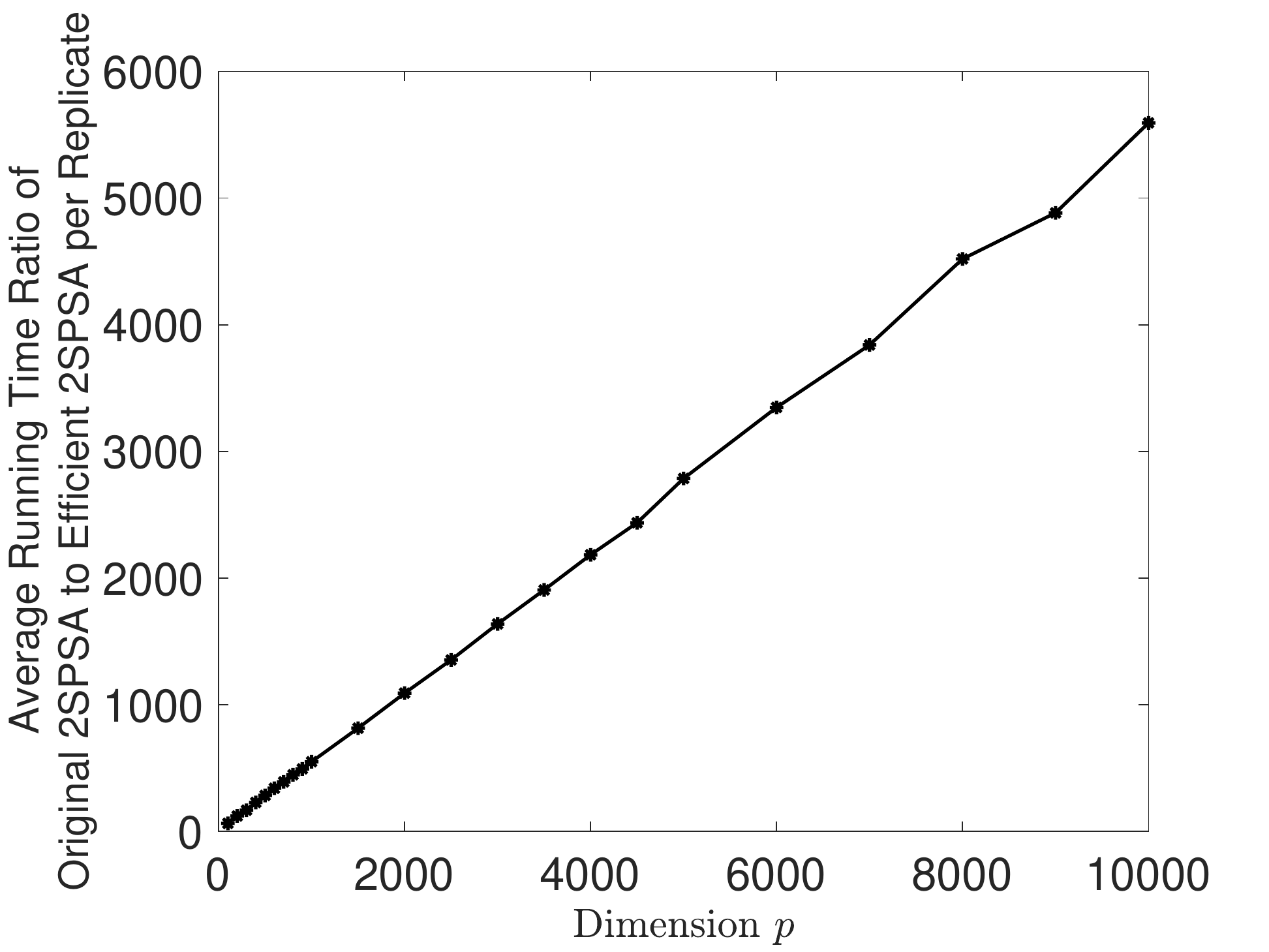}
	\caption{Running time ratio of the original 2SPSA to the efficient 2SPSA averaged over 10 replicates, where the same skewed-quartic loss function is used and the total number of iterations is fixed at 10 for each run. The trend is close to the theoretical linear relationship as a function of dimension $p$.}
	\label{fig:time}
\end{figure}
To measure the computational time, Figure~\ref{fig:time} plots the running time (measured by the built-in \textsc{C++} function \texttt{clock()} with no input) ratio of the original 2SPSA to the efficient 2SPSA averaged over 10 independent replicates with dimension up to 10000. It visualizes the practicality of the efficient 2SPSA over the original 2SPSA. In terms of the general trend, the linear relationship between the running time ratio and the dimension number is consistent with the $ O(p^3) $ cost for the original 2SPSA and $ O(p^2) $ cost for the efficient 2SPSA. From Figure~\ref{fig:time}, it is clear that the computational benefit of the efficient 2SPSA is more apparent as the dimension $ p $ goes up. The slope in Figure~\ref{fig:time} is roughly 0.56, which is consistent with the theoretical FLOPs ratio of 2.35 in Table~\ref{table:computational_complexity}, when accounting for differences due to the storage costs and code efficiency. With a more dedicated programming language, it is expected that the running time ratio will be closer to the theoretical FLOPs ratio in Table~\ref{table:computational_complexity}.

\subsection{Real-Data Study: Airfoil Self-Noise Data Set}

In this subsection, we compare the efficient 2SG with the stochastic gradient descent (SGD) and ADAM \cite{kingma2014adam} in training a one-hidden-layer feed-forward neural network to predict sound levels over an airfoil. Although there are many gradient-based methods to train a neural network, we select SGD and ADAM because they are popular and representative of algorithms within the machine learning community. Comparison of efficient 2SG and the two aforementioned algorithms is appropriate, as all of them use the noisy gradient evaluations \emph{only}, despite their different forms. Aside from the application here, neural networks have been widely used as function approximators in the field of aerodynamics and aeroacoustics. Recent applications include airfoil design \cite{rai2000aerodynamic} and aerodynamic prediction \cite{perez2000prediction}.

The dataset used in this example is the NASA data of NACA 0012 airfoil self-noise data set \cite{brooks1981trailing, brooks1989airfoil}, which is also available on the UC Irvine Machine Learning Repository \cite{brooks2014UCI}. This NASA dataset is obtained from a series of aerodynamic and acoustic tests of two and three-dimensional airfoil blade sections conducted in an anechoic wind tunnel. The inputs contain five variables: frequency (in Hertz); angle of attack (in degrees, not in radians); chord length (in meters); free-stream velocity (in meters per second); and suction side displacement thickness (in meters). The output contains the scaled sound pressure level (in decibels). Readers may refer to \cite{brooks1989airfoil} and \cite[Sect. 3]{errasquin2009airfoil} for further details.

Given the number of samples $ n = 1503 $, we fit the dataset using a one-hidden-layer neural network with 150 hidden neurons and sigmoid activating functions. Other choices of the neural network structures, such as using a different number of layers or different activation functions, have been implemented in \cite{errasquin2009airfoil}. Here, we use a neural network with a greater number of neurons than the one used in \cite{errasquin2009airfoil} to demonstrate the strength of the efficient 2SG in high-dimensional problems. The dimension $ p = 1051 $ is calculated as $ 5 \times 150 $ weights and $ 150 $ bias parameters for the hidden neurons plus 150 weights and 1 bias parameters for the output neuron.

Following the principles in \cite{wilson2003general}, we train the neural network in an online manner, where only one training sample is evaluated during each iteration. Denote the dataset as $ \{(y_i, \bx_i)\}_{i=1}^n $ and the parameters in the neural network as $ \btheta $. The loss function is chosen to be the empirical risk function (ERF), i.e., $ L(\btheta) = (1/n) \sum_{i=1}^n (y_i - \hat{y}_i)^2 $, where $ \hat{y}_i $ is the neural network output based on input $ \bx_i $ and parameter $ \btheta $. Consistent with the online training of an ERF in machine learning, the loss function based on that one training sample can be deemed as a noisy measurement of the loss function based on the entire dataset.

We implement SGD and ADAM with 10 epochs, each corresponds to 1503 iterations (one iteration per data point), resulting in a total of 15030 iterations. The gain sequence is chosen to be $ a_k = a / (k + 1 + A)^\alpha $ with $ A = 1503$ being 10\% of the total number of iterations and $ \alpha = 1 $ following \cite[pp. 113\textendash 114]{spall2005introduction}. After tuning for optimal performance, we choose $ a = 1 $ for SGD and ADAM \cite{kingma2014adam} . Other hyper-parameters for ADAM are determined from the default settings in \cite{kingma2014adam}. There is no ``re-setting'' of $ a_k $ imposed at the beginning of each epoch so that the gain sequence goes down consecutively across iterations and epochs. The initial value $ \hbtheta_0 = \bm{0} $. Recall that efficient 2SG requires three back-propagations per iteration, where SGD and ADAM only requires one back-propagation per iteration. Therefore, for a fair comparison, we implement the efficient 2SG under two different scenarios: (1) serial computing, and (2) concurrent computing.

Within each iteration of efficient 2SG, the three gradient measurements, $ \bY_k(\hbtheta_k),\bY_k(\hbtheta_k +c_k\bDelta_k) $ and $ \bY_k(\hbtheta_k-c_k\bDelta_k) $ can be computed simultaneously since they do not rely on each other. Using this concurrent implementation, the time spent in back-propagation can be reduced to one third of the original time. All the remaining steps are unchanged. Although the efficient 2SG takes time in performing algorithm~\ref{algo:preconditioning}, numerical studies indicate that majority of the time is spent on the back-propagation. Hence, under the concurrent implementation, the efficient 2SG has roughly the same running time per iteration as SGD and ADAM. Figure~\ref{fig:real_data_log_MSE_per_iteration} shows the value of ERF under the concurrent implementation. In the efficient 2SG, the gain sequences are chosen to be $ a_k = a/(A+k+1)^\alpha $, $ w_k = 1/(k+1) $ and $ c_k = c/(k+1)^\gamma $ with $A=1503, \alpha=1 $ and $ \gamma = 1/6 $ following \cite{spall1998implementation}. Other parameters $ a = 0.1 $ and $ c = 0.05 $ are tuned for optimal performance. The matrix $ \obLambda_k $ is computed the same as in the skewed-quartic function above. For better practical performance, training data is normalized to the range $ [0,1] $. Since all the inputs and output are positive, the normalization is simply done by dividing the data by their corresponding maximum. Figure~\ref{fig:real_data_log_MSE_per_iteration} shows that the efficient 2SG converges much quicker and obtains a better terminal value. One explanation for this phenomenon is that the Hessian information helps the speed of convergence, similar to the benefits of Newton-Raphson relative to the gradient-descent method.

\begin{figure}[!htbp]
	\centering
	\includegraphics[width=\linewidth]{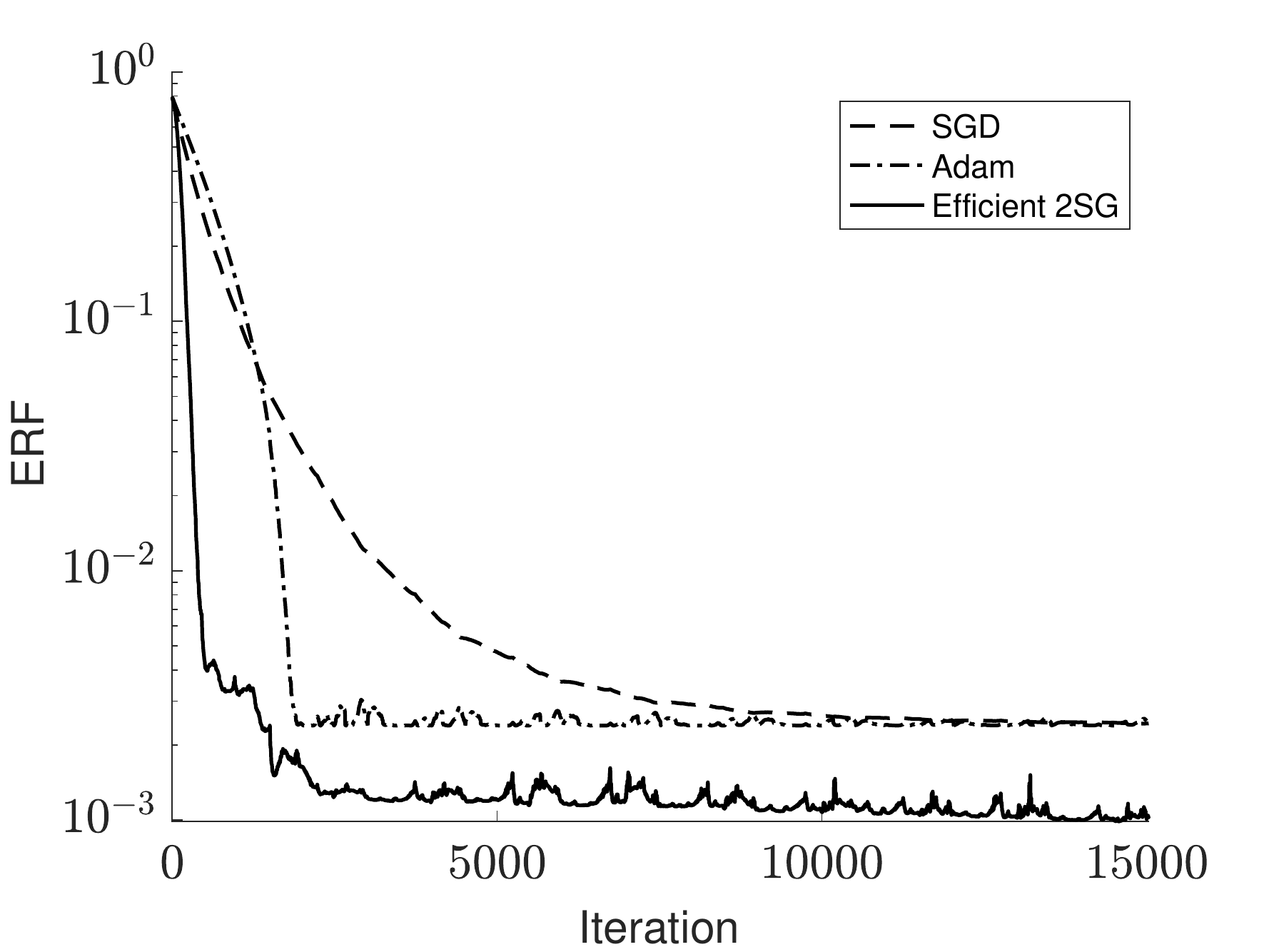}
	\caption{ERF of training samples in SGD, ADAM, and the efficient 2SG under concurrent implementation.}
	\label{fig:real_data_log_MSE_per_iteration}
\end{figure}

Figure~\ref{fig:real_data_log_MSE_per_iteration_adjusted} compares the ERF of the two algorithms in terms of the number of gradient evaluations. Note that each iteration of SGD and ADAM takes one gradient evaluation, while the efficient 2SG takes three gradient evaluations. This comparison is suitable for the non-concurrent implementation since one iteration of the efficient 2SG has roughly the cost of three iterations of the SGD. It is shown in Figure~\ref{fig:real_data_log_MSE_per_iteration_adjusted} that the efficient 2SG still outperforms SGD and ADAM even without any concurrent implementation. There is less than a 7\% difference in running time among SGD, ADAM, and the efficient 2SG under the concurrent implementation.

\begin{figure}[!htbp]
	\centering
	\includegraphics[width=\linewidth]{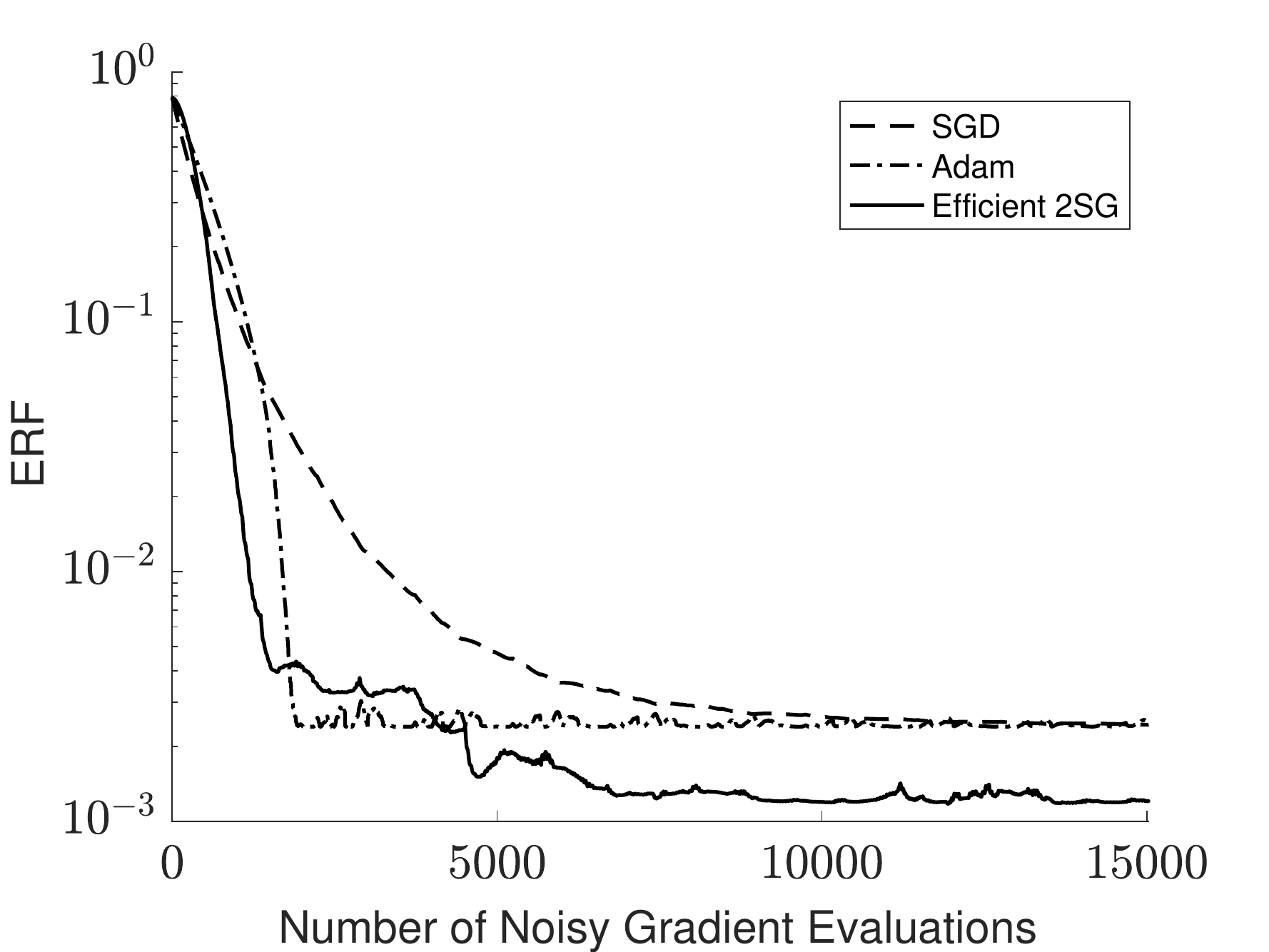}
	\caption{ERF of training samples in SGD, ADAM, and the efficient 2SG per gradient evaluation under \emph{serial} (non-concurrent) computing. SGD and ADAM have 3 times the number of iterations of 2SG.}
	\label{fig:real_data_log_MSE_per_iteration_adjusted}
\end{figure}

\section{Conclusions} \label{sec:conclusion}
To the best of our knowledge, 2SPSA, 2SG, E2SPSA and E2SG are the fastest possible second-order stochastic Newton-type algorithms based on the estimation of the Hessian matrix from either noisy loss measurements or noisy gradient measurements. The algorithms use only a small number of measurements, independent of $p$, at each iteration. This paper shows how symmetric indefinite matrix factorization may be used to reduce the per-iteration FLOPs of the algorithms from $ O(p^3) $ to $ O(p^2) $. The approach guarantees a positive definite estimation of the Hessian matrix (``preconditioned") and a valid stochastic Newton-type update of the parameter vector, both in $ O(p^2) $. This implementation scheme serves to improve practical performance in high-dimensional problems, such as deep learning. In our proposed scheme, the formal convergence and convergence rate for $\hbtheta_k$ and $\obH_k$ are maintained, following the prior work \cite{spall2000adaptive,spall2009feedback}. 

Besides the theoretical guarantee, numerical studies show that the efficient implementation of second-order SP methods provides a promising convergence rate at a tolerable computing cost, compared with stochastic gradient descent method. Note that second-order methods do not provide global convergence in general, and therefore the second-order method is recommended to be implemented after reaching the vicinity of the optimizer.

Overall, our proposed scheme of second-order SA methods has value in high-dimensional optimization and learning problems. Because a key step of this work is the symmetric indefinite factorization, the proposed algorithm might be useful for other algorithms whenever updating an estimated Hessian matrix is involved, such as second-order random directions stochastic approximation \cite{prashanth2017adaptive}, natural gradient descent \cite{amari2000adaptive}, and stochastic variants of the BFGS quasi-Newton methods \cite{schraudolph2007stochastic}. In all those methods, instead of directly updating the matrix of interest (usually the Hessian matrix), one might consider updating its corresponding symmetric indefinite factorization in the manner of this paper to speed up any matrix inverse operation or matrix eigenvalue modification. Overall, the proposed approach provides a practical second-order method that can be used following first-order or other methods that are able to put the iterate in at least the vicinity of the solution.

%
%
%
%




\bibliographystyle{IEEEtran}
\bibliography{Fa2SPSA_reference}

\end{document}